\newtheorem{thm}{Theorem}[section]
\newtheorem{lem}[thm]{Lemma}
\newtheorem{prop}[thm]{Proposition}
\theoremstyle{remark}
\newtheorem{rem}{Remark}[section]
\def\C{{\mathbb C}}
\def\N{{\mathbb N}}
\def\R{{\mathbb R}}
\def\1{\text{\bf {1}}}
\DeclareMathOperator{\csch}{cosech}
\begin{document}

\title[Mixed norm estimates for Hermite multipliers]
{Mixed norm estimates for Hermite multipliers}
\author{R. Lakshmi Lavanya and S. Thangavelu}

\address{Department of Mathematics\\ Indian Institute
of Science\\Bangalore-560 012}
\email{lakshmi12@math.iisc.ernet.in, veluma@math.iisc.ernet.in}

\date{\today}
\keywords{Hermite semigroup, Laguerre semigroup, Bochner-Riesz
means, Littlewood-Paley g-functions, Hermite multipliers}
\subjclass{Primary: 42C10, 47G40, 26A33. Secondary: 42B20, 42B35,
33C45.}

\begin{abstract}
In this article mixed norm estimates are obtained for some
integral operators, from which those for the Hermite semigroup and
the Bochner-Riesz means associated with the Hermite expansions are
deduced. Also, mixed norm estimates for the Littlewood-Paley
$g-$functions and $g^*-$functions for the Hermite expansions are
obtained, which lead to those for Hermite multipliers.
\end{abstract}

\maketitle

\section{Introduction}
The aim of this article is to study mixed norm estimates for
multiplier operators associated to Hermite expansions. Given a
function $\varphi$ defined on the set of all positive integers one
can define the operator
$$\varphi(H) = \sum\limits_{k=0}^{\infty} \ \varphi(2k+n) \ P_k$$
where $P_k$ are the spectral projections associated to the Hermite
operator $H=- \Delta +|x|^2$ on $\R^n.$ In \cite{Th} sufficient
conditions on $\varphi$ have been given so that $\varphi(H)$
defines a bounded linear operator on $L^p(\R^n),$ for
$1<p<\infty.$ In this paper we are interested in mixed norm
estimates for $\varphi(H).$ Let $L^{p,2}(\R^n)$ stand for the
space of functions $f(r\omega)$ on $\R^+\times S^{n-1}$ for which
$$\|f\|_{L^{p,2}(\R^n)} = \left( \int
\limits_0^\infty \left(\ \int\limits_{S^{n-1}} |f(r\omega)|^2
d\omega\right)^\frac{p}{2} r^{n-1} \ dr\right)^{\frac{1}{p}}$$ are
finite. We show that under some conditions on the multiplier
$\varphi,$ the mixed norm estimates
$$\|\varphi(H)f\|_{L^{p,2}(\R^n)} \leq C \|f\|_{L^{p,2}(\R^n)}$$
hold for all $1<p<\infty.$

This work is motivated by the recent paper \cite{CR} by Ciaurri
and Roncal where they have studied the boundedness of Riesz
transforms associated to the Hermite operator written in polar
coordinates. The Hermite operator $H$ admits a family of
eigenfunctions given by the multidimensional Hermite functions
$\Phi_\alpha(x).$ Written in polar coordinates the same operator
admits another family of eigenfunctions which are of the form
$g(r)Y(\omega), \ x=r\omega\in \R^n$ where $Y$ are spherical
harmonics and $g$ are suitable Laguerre functions. In terms of
these eigenfunctions one can define Riesz transforms for the
Hermite operator. In \cite{CR} the authors have proved the
boundedness of these Riesz transforms. It turns out that their
result is equivalent to mixed norm estimates for the standard
Riesz transforms $R_j = A_jH^{-\frac{1}{2}}, \ j=1,2,\cdots ,n,$
associated to Hermite expansions. Here $A_j =
\frac{\partial}{\partial x_j}+x_j$ are the annihilation operators
of quantum mechanics.

In view of the above observation it is natural to ask for mixed
norm estimates for certain operators associated to Hermite
expansions. In this paper we consider the Hermite semigroup,
Bochner-Riesz means and more general multiplier transformations
for the Hermite expansions. The connection between the two kinds
of expansions viz., the one in terms of the standard Hermite
functions and the other in terms of spherical harmonics times
Laguerre functions is provided by the Hecke-Bochner formula for
the Hermite expansion, see Theorem 3.4.1 in \cite{Th}. However,
our investigation of mixed norm estimates for Hermite multipliers
originates from the observation that the kernel of such operators
are of the form $K_0(|x-y|,|x+y|)$ and hence one can make use of
Funk-Hecke formula to study such operators. We will see that mixed
norm estimates for Hermite multipliers reduces to a vector valued
inequality for a sequence of operators, all of them related to the
original operator via Funk-Hecke formula. These component
operators can also be viewed as Laguerre multipliers in view of
the Hecke-Bochner formula.

Though in this paper we have only treated the Hermite operator,
all the results can be proved for the cases of Dunkl Harmonic
Oscillator and Special Hermite operator. The main tools used in
this paper viz., the Funk-Hecke formula and Hecke-Bochner identity
are also available for $h-$harmonics and bigraded spherical
harmonics. Hence the proofs of the main results can be suitably
modified to cover these more general operators. We plan to return
to some of these problems elsewhere.

The plan of the paper is as follows. In the next section we study
mixed norm estimates of integral operators with kernels of the
form $K_0(|x-y|,|x+y|).$ The results proved in Section 2 will be
applied in Section 3 to prove mixed norm estimates for the Hermite
semigroup and Bochner-Riesz means associated to Hermite
expansions. In Section 4 we study mixed norm estimates for
$g-$functions for the Hermite semigroup which will be used in
Section 5 to prove our main result on Hermite multipliers.

\section{Mixed norm estimates for some integral operators}
\setcounter{equation}{0}

In this section we study mixed norm estimates for certain
operators given by (singular) kernels having some special
properties. More precisely we consider operators of the form
$$Tf(x) = \int\limits_{\R^n} K(x,y) \ f(y) \ dy$$
where the kernel $K(x,y) = K_0(|x-y|,|x+y|)$ for some $K_0.$ We
are interested in estimates of the form
$$\|Tf\|_{L^{p,2}(\R^n)} \leq C \|f\|_{L^{p,2}(\R^n)}.$$ We prove that under
some mild assumptions on $T$ the above mixed norm estimates hold.
Examples of such operators are provided by the Hermite semigroup
$e^{-tH},$ Bochner-Riesz means $S_R^\delta$ associated to Hermite
expansions and more generally Hermite multipliers. We study these
operators in later sections using the main result proved in this
section.

We need several results from the theory of spherical harmonics on
$S^{n-1}.$ For each $m=0,1,2,\cdots,$ let $\mathcal{H}_m$ be the
space of spherical harmonics of degree $m.$ Let $Y_{m,j} , \
j=1,2,\cdots,d(m)$ be an orthonormal basis for $\mathcal{H}_m,$
where $d(m)$ is the dimension of $\mathcal{H}_m.$ We can take
these spherical harmonics to be real valued. It is well known that
the reproducing kernel
$$\sum\limits_{j=1}^{d(m)} \ Y_{m,j}(\omega) \ Y_{m,j}(\omega')$$
depends only on $\omega\cdot \omega'$ and hence there is a
function, denoted by $C_m^{\frac{n}{2}-1} (u),$ defined on
$[-1,1]$ such that
$$\sum\limits_{j=1}^{d(m)} \ Y_{m,j}(\omega) \ Y_{m,j}(\omega') = C_m^{\frac{n}{2}-1}(\omega\cdot\omega').$$
It can be shown that $C_m^{\frac{n}{2}-1}(u)$ are polynomials,
called ultra spherical polynomials of type $(\frac{n}{2}-1)$
satisfying the differential equation
$$(1-u^2)\  \varphi''(u)-(n-1)\ u\ \varphi'(u) + m(m+n-2)\ \varphi(u) =0.$$
Moreover, it is known that $$C_m^{\frac{n}{2}-1}(1)
=\sum\limits_{j=1}^{d(m)} \ Y_{m,j}(\omega) ^2 = d(m)
\omega_{n-1}^{-1},$$ $\omega _{n-1}$ being the surface measure of
$S^{n-1}.$ We let $$P_m^{\frac{n}{2}-1}\hspace{-0.1cm}(u) =
C_m^{\frac{n}{2}-1}(1)^{-1}\ C_m^{\frac{n}{2}-1}(u)$$ to stand for
the normalised ultra spherical polynomials. We note that
$|P_m^{\frac{n}{2}-1}\hspace{-0.1cm}(u)|\leq 1$ for all $u\in
[-1,1].$

For any function $F$ on $[-1,1]$ integrable with respect to the
measure $(1-u^2)^{\frac{n-3}{2}} du,$ the Funk-Hecke formula says
that
$$\int\limits_{S^{n-1}} F(\omega\cdot \omega') \ Y_{m,j}(\omega') \ d\omega' =
Y_{m,j}(\omega) \int\limits_{-1}^1 F(t) P_m^{\frac{n}{2}-1}\hspace{-0.1cm}(t) \
(1-t^2)^{\frac{n-3}{2}} \ dt.$$ We make use of this formula in the
proof of our main result in this section. Given the kernel $K(x,y)
= K_0(|x-y|,|x+y|)$ we define a sequence of kernels $K_m$ by
setting
$$K_m(x,y) = K_0(|x-y|,|x+y|) \ P_m^{\frac{n}{2}-1}\hspace{-0.1cm}(x'\cdot y') $$
where $x'=\frac{x}{|x|},\ y'=\frac{y}{|y|}.$ Consider the
operators
$$T_mf(x) = \int\limits_{\R^n} K_m(x,y) f(y) \ dy.$$
We observe that $T_mf$ is radial whenever $f$ is radial. Indeed,
when $f(y)=g(|y|)$ we have
$$T_mf(x) = \int\limits_0^\infty g(s) \left( \int \limits_{S^{n-1}} K_m(x,sy') dy'\right) s^{n-1}\ ds.$$
But then
$$\int \limits_{S^{n-1}}\ K_m(x,sy')\ dy' = \int \limits_{S^{n-1}} K_0(|x-sy'|,|x+sy'|)  \ P_m^{\frac{n}{2}-1}\hspace{-0.1cm}(x'\cdot y') \ dy'$$
is clearly radial in $x$ as the measure $dy'$ is rotation
invariant. Thus we can also consider $T_m$ as an operator on
$L^p(\R^+,r^{n-1}dr).$ We also let $\widetilde{T}$ stand for the
integral operator with kernel $|K(x,y)|.$

\begin{thm} \label{ttilde}
Let $T$ be an integral operator with kernel $K(x,y) =
K_0(|x-y|,|x+y|).$ If $\widetilde{T}$ is bounded on $L^p(\R^n)$
then $T$ satisfies the mixed norm estimate
$$\|Tf\|_{L^{p,2}(\R^n)} \leq C \|f\|_{L^{p,2}(\R^n)}.$$
\end{thm}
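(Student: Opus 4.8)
The plan is to diagonalize $T$ against the spherical harmonic decomposition and thereby reduce the mixed norm estimate to a vector-valued inequality for the component operators $T_m$, which in turn comes for free from the positivity built into $\widetilde{T}$. First I would expand an arbitrary $f$ into spherical harmonics, writing $f(r\omega)=\sum_{m}\sum_{j=1}^{d(m)} f_{m,j}(r)\,Y_{m,j}(\omega)$ with radial coefficients $f_{m,j}(r)=\int_{S^{n-1}} f(r\omega)Y_{m,j}(\omega)\,d\omega$. By Parseval on the sphere, $\int_{S^{n-1}}|f(r\omega)|^2\,d\omega=\sum_{m,j}|f_{m,j}(r)|^2$, so that the mixed norm is literally a vector-valued norm,
$$\|f\|_{L^{p,2}(\R^n)}^p=\int_0^\infty\Big(\sum_{m,j}|f_{m,j}(r)|^2\Big)^{p/2}r^{n-1}\,dr,$$
i.e. the $L^p(\R^+,r^{n-1}dr;\ell^2)$ norm of the coefficient sequence $(f_{m,j})$.

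The crucial step is to show that $T$ acts diagonally on this decomposition: $T\big(f_{m,j}(|\cdot|)\,Y_{m,j}\big)=\big(T_m f_{m,j}\big)(|\cdot|)\,Y_{m,j}$. To see this I would fix $x=r x'$, write $y=s\omega'$, and note that $|x\mp s\omega'|^2=r^2+s^2\mp 2rs\,(x'\cdot\omega')$ depend on $\omega'$ only through $x'\cdot\omega'$; hence for each fixed $r,s$ the kernel $\omega'\mapsto K_0(|x-s\omega'|,|x+s\omega'|)$ is a function of $x'\cdot\omega'$ alone. The Funk--Hecke formula then converts the spherical integral $\int_{S^{n-1}}K_0(\cdots)Y_{m,j}(\omega')\,d\omega'$ into $Y_{m,j}(x')$ times the one-dimensional integral against $P_m^{\frac{n}{2}-1}(t)(1-t^2)^{(n-3)/2}$, which is exactly the radial kernel defining $T_m$. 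Consequently
$$\|Tf\|_{L^{p,2}(\R^n)}^p=\int_0^\infty\Big(\sum_{m,j}|T_m f_{m,j}(r)|^2\Big)^{p/2}r^{n-1}\,dr,$$
and the theorem is reduced to the vector-valued bound $\big\|(\sum_{m,j}|T_m h_{m,j}|^2)^{1/2}\big\|_{L^p(r^{n-1}dr)}\le C\big\|(\sum_{m,j}|h_{m,j}|^2)^{1/2}\big\|_{L^p(r^{n-1}dr)}$.

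To close this I would exploit the two facts carried by the hypothesis. Since $|P_m^{\frac{n}{2}-1}(u)|\le 1$, the radial kernel of $T_m$ is dominated in absolute value by that of $\widetilde{T}$, giving the pointwise bound $|T_m h|\le \widetilde{T}(|h|)$ for radial $h$; here $\widetilde{T}$ acts on radial functions through the nonnegative kernel obtained by integrating $|K_0(|x-s\omega'|,|x+s\omega'|)|$ over $\omega'\in S^{n-1}$, and because $\widetilde{T}$ preserves radiality its boundedness on $L^p(\R^n)$ transfers to $L^p(\R^+,r^{n-1}dr)$. As this kernel is nonnegative, Minkowski's integral inequality yields $(\sum_{m,j}(\widetilde{T}g_{m,j})^2)^{1/2}\le \widetilde{T}\big((\sum_{m,j}g_{m,j}^2)^{1/2}\big)$ for nonnegative $g_{m,j}$, so the $\ell^2$-valued extension of $\widetilde{T}$ is automatically bounded with the same norm. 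Combining the domination with this positivity argument gives $\|Tf\|_{L^{p,2}(\R^n)}\le C\,\|f\|_{L^{p,2}(\R^n)}$ with $C$ the operator norm of $\widetilde{T}$.

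The step I expect to be the main obstacle is the diagonalization in the second paragraph: establishing the identity $T(f_{m,j}Y_{m,j})=(T_m f_{m,j})Y_{m,j}$ cleanly requires justifying the interchange of the spherical integration with the radial integration and the expansion in $m,j$, and checking that the Funk--Hecke reduction genuinely reproduces the kernel of $T_m$ as defined (using the reproducing-kernel identity $\sum_j Y_{m,j}(x')^2=C_m^{\frac{n}{2}-1}(1)$ to match normalisations). Once this analytic reduction is in place, the remaining two ingredients---the pointwise domination by $\widetilde{T}$ and the free vector-valued extension of a positive operator---are routine.
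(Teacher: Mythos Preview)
Your proposal is correct and follows essentially the same route as the paper: diagonalize $T$ on the spherical harmonic basis via Funk--Hecke to reduce to the vector-valued inequality for the $T_m$, then dominate $|T_m h|\le \widetilde{T}(|h|)$ using $|P_m^{\frac{n}{2}-1}|\le 1$. The only difference is in the final step: the paper invokes the Marcinkiewicz--Zygmund theorem to pass from scalar $L^p$-boundedness of $\widetilde{T}$ to its $\ell^2$-valued extension, whereas you use the nonnegativity of the kernel of $\widetilde{T}$ together with Minkowski's integral inequality to get the pointwise bound $(\sum \widetilde{T}g_{m,j}^2)^{1/2}\le \widetilde{T}\big((\sum g_{m,j}^2)^{1/2}\big)$. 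Your argument is slightly more elementary and even yields the sharp constant (the scalar norm of $\widetilde{T}$), but it relies on positivity; the paper's citation of Marcinkiewicz--Zygmund is more general and would apply even without a nonnegative kernel. Either way the proof goes through.
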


\begin{proof}
For a fixed $r,$ let us calculate the spherical harmonic
coefficients of $Tf(r\omega).$
$$\int\limits_{S^{n-1}} Tf(r\omega)\ Y_{m,j}(\omega)\ d\omega = \int\limits_{\R^n} \left(\ \int\limits_{S^{n-1}}
K(r\omega,y) Y_{m,j}(\omega) d\omega\right) f(y)\ dy.$$ If
$y=s\omega',$ the kernel, $K(x,y) =
K_0(|r\omega-s\omega'|,|r\omega+s\omega'|)$ is a function of
$\omega\cdot \omega'$ and hence by Funk-Hecke formula we have

\begin{eqnarray*}
&&\int\limits_{S^{n-1}}\hspace{-0.2cm} K(r\omega,s \omega') \ Y_{m,j}(\omega) \ d\omega\\
&&  = Y_{m,j}(\omega') \int\limits_{-1}^1
 K_0(q_+(r,s;u)^{\frac{1}{2}},q_-(r,s;u)^{\frac{1}{2}})\ P_m^{\frac{n}{2}-1}\hspace{-0.1cm}(u) \
(1-u^2)^{\frac{n-3}{2}} \ du\end{eqnarray*}
 where $q_{\pm}(r,s,u) = r^2+s^2\pm 2rsu.$
Therefore,
\begin{eqnarray*}
&&\int\limits_{S^{n-1}} Tf(r\omega)\ Y_{m,j}(\omega)\ d\omega\\
&&  = \int\limits_{0}^\infty f_{m,j}(s) \left(\int\limits_{-1}^1
K_0(q_+(r,s;u)^{\frac{1}{2}},q_-(r,s;u)^{\frac{1}{2}})  \
P_m^{\frac{n}{2}-1}\hspace{-0.1cm}(u) \ (1-u^2)^{\frac{n-3}{2}} \
du\right) s^{n-1} ds \end{eqnarray*}
 where $$f_{m,j}(s) = \int\limits_{S^{n-1}} f(s\omega')
 Y_{m,j}(\omega')\
d\omega'.$$ By the same Funk-Hecke formula applied to $K(x,y) \
P_m^{\frac{n}{2}-1}\hspace{-0.1cm}(x'\cdot y')$ we get
\begin{eqnarray*}
&&\int\limits_{S^{n-1}} K(x,y) \ P_m^{\frac{n}{2}-1}\hspace{-0.1cm}(x'\cdot y')\ dy'\\
&&  = \int\limits_{-1}^1
K_0(q_+(r,s;u)^{\frac{1}{2}},q_-(r,s;u)^{\frac{1}{2}})  \
P_m^{\frac{n}{2}-1}\hspace{-0.1cm}(u) \ (1-u^2)^{\frac{n-3}{2}} \
du. \end{eqnarray*} Consequently
\begin{eqnarray*}
   \int\limits_{S^{n-1}} Tf(r\omega)\ Y_{m,j}(\omega)\ d\omega &= & \int\limits_0^\infty \left( \ \int\limits_{S^{n-1}}
   f_{m,j}(s)\  K_m(x,y)\  dy'\right) s^{n-1}\ ds\\
   &=& T_m f_{m,j}(r)
\end{eqnarray*}
Thus we see that
$$Tf(r\omega) = \sum \limits_{m=0}^{\infty}\  \sum
\limits_{j=1}^{d(m)} \ T_m f_{m,j}(r) \ Y_{m,j}(\omega).$$ This
leads us to
$$\int\limits_{S^{n-1}} |Tf(r\omega)|^2\  d\omega = \sum \limits_{m=0}^{\infty} \ \sum
\limits_{j=1}^{d(m)}\  |T_mf_{m,j}(r)|^2.$$ By considering $T_m
f_{m,j}$ as $T_m$ applied to the radial function $f_{m,j}(|y|)$ we
see that
$$T_m f_{m,j}(x) = \int\limits_{\R^n} K(x,y) \ P_m^{\frac{n}{2}-1}\hspace{-0.1cm}(x'\cdot y') \ f_{m,j}(|y|) \ dy$$
and hence $|T_m f_{m,j}(x)| \leq \widetilde{T}(| f_{m,j}|)(x)$
since $P_m^{\frac{n}{2}-1}$ is bounded by $1.$ Thus the mixed norm
inequality will follow from
\begin{eqnarray*}
&&\hspace{-1cm}\int \limits_0^\infty \left( \sum
\limits_{m=0}^{\infty} \ \sum \limits_{j=1}^{d(m)}\
\widetilde{T}(|f_{m,j}|)(r)^2 \right)^\frac{p}{2} r^{n-1}
dr\\
&& \hspace{1cm} \leq C \int \limits_0^\infty \left( \sum
\limits_{m=0}^{\infty}\  \sum \limits_{j=1}^{d(m)}|f_{m,j}(r)|^2
\right)^\frac{p}{2} r^{n-1}  dr. \end{eqnarray*} If
$\widetilde{T}$ is bounded on $L^p(\R^n),$ the above is an
immediate consequence of a theorem of Marcinkiewicz and Zygmund
\cite{MZ}.
\end{proof}

In the next theorem we obtain a sufficient condition on the
operators $T_m$ so that the original operator satisfies a mixed
weighted norm inequality. On $\R^+$ consider the measure
$d\mu_\alpha = r^{2\alpha+1}dr$ with respect to which it becomes a
homogeneous space. We define Muckenhoupt's $A_p^\alpha$ weights
adapted to the measure $d\mu_ \alpha$ as follows. A positive
weight function $w$ is said to belong to $A_p^\alpha(\R^+)$ if it
satisfies
$$\left( \frac{1}{\mu_\alpha(Q)} \int\limits_Q w(r) \ d\mu_\alpha \right)
 \left(\frac{1}{\mu_\alpha(Q)} \int\limits_Q w(r)^{-\frac{1}{p-1}} \ d\mu_\alpha\right)^{p-1}\leq C$$
 for all intervals $Q\subset\R^+.$

Let $L^{p,2}(w)$ stand for the space of functions $f(r\omega)$ on
$\R^+\times S^{n-1}$ for which
$$\|f\|_{L^{p,2}(w)} = \left( \int
\limits_0^\infty \left(\ \int\limits_{S^{n-1}} |f(r\omega)|^2
d\omega\right)^\frac{p}{2}w(r) \  r^{n-1} \
dr\right)^{\frac{1}{p}}$$ are finite.

 \begin{thm}\label{tuniform}
 Let $T$ be as in the previous theorem and let $T_m$ be defined as
 before. If $T_m$ are uniformly bounded on $L^{p_0}(\R^+, w
 d\mu_{\frac{n}{2}-1})$ for some $p_0, \ 1 < p_0<\infty$ and all $w\in
 A_{p_0}^{\frac{n}{2}-1}\hspace{-0.1cm}(\R^+),$ then the mixed weighted norm
 inequality
 $$\|Tf\|_{L^{p,2}(w)} \leq C \|f\|_{L^{p,2}(w)}$$
holds for all $p,\ 1<p<\infty$ and all $w\in
A_{p}^{\frac{n}{2}-1}\hspace{-0.1cm}(\R^+).$
 \end{thm}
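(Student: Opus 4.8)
The plan is to reduce the mixed weighted norm estimate to a single vector-valued inequality for the family $\{T_m\}$ and then to obtain that inequality from weighted extrapolation. First I would recall the expansion already established in the proof of Theorem~\ref{ttilde}, namely $Tf(r\omega) = \sum_{m=0}^\infty \sum_{j=1}^{d(m)} T_m f_{m,j}(r)\, Y_{m,j}(\omega)$, which by orthonormality of the $Y_{m,j}$ on $S^{n-1}$ gives $\int_{S^{n-1}} |Tf(r\omega)|^2\, d\omega = \sum_{m=0}^\infty \sum_{j=1}^{d(m)} |T_m f_{m,j}(r)|^2$ and likewise $\int_{S^{n-1}} |f(r\omega)|^2\, d\omega = \sum_{m,j} |f_{m,j}(r)|^2$. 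Since $2(\tfrac n2-1)+1=n-1$, we have $d\mu_{\frac{n}{2}-1}=r^{n-1}\,dr$, so that $\|Tf\|_{L^{p,2}(w)}^p$ is exactly the $p$-th power of the $L^p(w\, d\mu_{\frac{n}{2}-1})$ norm of the function $r\mapsto\big(\sum_{m,j}|T_mf_{m,j}(r)|^2\big)^{1/2}$, and similarly for $f$. Thus the theorem is equivalent to the $\ell^2$-valued inequality
$$\Big\| \big(\textstyle\sum_{m,j} |T_m g_{m,j}|^2\big)^{1/2} \Big\|_{L^p(w\, d\mu_{\frac{n}{2}-1})} \leq C \Big\| \big(\textstyle\sum_{m,j} |g_{m,j}|^2\big)^{1/2} \Big\|_{L^p(w\, d\mu_{\frac{n}{2}-1})}$$
taken with $g_{m,j}=f_{m,j}$ and the index running over all pairs $(m,j)$.

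To prove this inequality I would work on the space of homogeneous type $(\R^+, d\mu_{\frac{n}{2}-1})$ and invoke Rubio de Francia's extrapolation theorem in its vector-valued form. The hypothesis is precisely that $\{T_m\}$ are bounded on $L^{p_0}(w\, d\mu_{\frac{n}{2}-1})$ for every $w\in A_{p_0}^{\frac{n}{2}-1}(\R^+)$, with an operator bound controlled solely by the characteristic $[w]_{A_{p_0}^{\frac{n}{2}-1}}$ and independent of $m$. The extrapolation machinery then asserts that any family $\{S_i\}$ of operators obeying such a uniform single-exponent weighted bound automatically satisfies, for every $p\in(1,\infty)$, every $q\in(1,\infty)$, and every $w\in A_p^{\frac{n}{2}-1}(\R^+)$, the estimate
$$\Big\| \big(\textstyle\sum_{i} |S_i g_i|^q\big)^{1/q} \Big\|_{L^p(w\, d\mu_{\frac{n}{2}-1})} \leq C \Big\| \big(\textstyle\sum_{i} |g_i|^q\big)^{1/q} \Big\|_{L^p(w\, d\mu_{\frac{n}{2}-1})}.$$
Specializing to $q=2$, letting the index $i$ range over the pairs $(m,j)$ and setting $S_{(m,j)}=T_m$, $g_{(m,j)}=f_{m,j}$ (so that the $d(m)$ functions in the $m$-th block are all acted on by the single operator $T_m$), recovers exactly the displayed inequality of the first paragraph, and hence the theorem.

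The step requiring the most care is the uniformity in $m$. The passage to the $\ell^2$-valued estimate summed over the infinitely many degrees $m$ and the multiplicities $d(m)$ hinges on the fact that the extrapolated constant depends only on $[w]_{A_p^{\frac{n}{2}-1}}$ and on the \emph{uniform} $L^{p_0}(w)$ bound of the $T_m$, not on any individual operator. I would therefore be careful to record the extrapolation input in its quantitative form, since it is this control of the bound by the weight characteristic alone—rather than mere qualitative boundedness of each $T_m$—that permits the sum over $(m,j)$ to be absorbed into one constant $C$. The remaining verifications, that $A_p^{\frac{n}{2}-1}(\R^+)$ is the appropriate Muckenhoupt class for $(\R^+,d\mu_{\frac{n}{2}-1})$ and that the two mixed norms match under $d\mu_{\frac{n}{2}-1}=r^{n-1}dr$, are routine.
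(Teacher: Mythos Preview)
Your argument is correct and follows essentially the same route as the paper: reduce the mixed norm to an $\ell^2$-valued inequality for the $T_m$ via the spherical harmonic expansion, then deduce that inequality from Rubio de Francia extrapolation on $(\R^+,d\mu_{\frac{n}{2}-1})$. The only difference is packaging: the paper invokes the scalar (pair-based) extrapolation theorem twice---first to pass from $p_0$ to $p=2$ for each $T_m$, then to extrapolate the pair $\big((\sum|f_{m,j}|^2)^{1/2},(\sum|T_mf_{m,j}|^2)^{1/2}\big)$ from $p=2$ to all $p$---whereas you cite the resulting vector-valued corollary directly.
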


 As in the proof of Theorem \ref{ttilde} the mixed norm weighted inequality
 reduces to
 \begin{eqnarray*}
&&\hspace{-1cm}\int \limits_0^\infty \left( \sum
\limits_{m=0}^{\infty} \ \sum \limits_{j=1}^{d(m)}\ |T_m
f_{m,j}(r)|^2 \right)^\frac{p}{2}w(r) \ r^{n-1}\
 dr\\
&& \hspace{1cm} \leq C \int \limits_0^\infty \left( \sum
\limits_{m=0}^{\infty} \ \sum \limits_{j=1}^{d(m)}\ |f_{m,j}(r)|^2
\right)^\frac{p}{2} w(r) \ r^{n-1} \  dr. \end{eqnarray*} We can
easily deduce this inequality from the following extrapolation
theorem (Theorem 3.1 in \cite{Du1}, Proposition 3.3 in \cite{CR}).

\begin{thm} \label{xtrapoln}
Suppose that for some pair of nonnegative functions $f,g,$ for
some fixed $1<p_0<\infty$ and for all $w\in A_{p_0}^\alpha(\R^+)$
we have
$$\int\limits_0^\infty g(r) ^{p_0}\ w(r) \ d\mu_\alpha \leq C(w)\int\limits_0^\infty f(r) ^{p_0} w(r) \ d\mu_\alpha.$$
Then for all $1<q<\infty$ and all $w\in A_q^\alpha(\R^+)$ we have
$$\int\limits_0^\infty g(r) ^q w(r)\  d\mu_\alpha \leq C \int\limits_0^\infty f(r) ^q w(r)\  d\mu_\alpha.$$
\end{thm}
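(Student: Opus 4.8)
This is the Rubio de Francia weighted extrapolation theorem, here on the half-line with the measure $d\mu_\alpha=r^{2\alpha+1}\,dr$. Since $2\alpha+1>-1$ in the cases of interest, this measure is doubling, so $(\R^+,|\cdot|,d\mu_\alpha)$ is a space of homogeneous type and the full Muckenhoupt theory adapted to $d\mu_\alpha$ is available. The plan is to reproduce the Rubio de Francia iteration argument in this setting, using three ingredients: (i) the Hardy--Littlewood maximal operator $M$ associated with $d\mu_\alpha$ is bounded on $L^t(v\,d\mu_\alpha)$ whenever $v\in A_t^\alpha$, $1<t<\infty$; (ii) the duality $v\in A_t^\alpha\iff v^{1-t'}\in A_{t'}^\alpha$ together with the reverse H\"older inequality and the self-improvement of $A_\infty^\alpha$ weights; and (iii) the iteration lemma: for $0\le\psi\in L^t(v)$ the series $\mathcal R\psi=\sum_{k\ge0}(2\|M\|)^{-k}M^k\psi$ produces a majorant $\mathcal R\psi\ge\psi$ with $\|\mathcal R\psi\|_{L^t(v)}\le2\|\psi\|_{L^t(v)}$ and $M(\mathcal R\psi)\le2\|M\|\,\mathcal R\psi$, so that $\mathcal R\psi\in A_1^\alpha$ with constant at most $2\|M\|$. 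Throughout I read the hypothesis as asserting that its constant depends on the weight only through its $A_{p_0}^\alpha$ characteristic.

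With $q=p_0$ the assertion is the hypothesis, so it suffices to treat $q>p_0$ and $q<p_0$. Take $q>p_0$, fix $w\in A_q^\alpha$, set $r=q/p_0>1$ and $\sigma=w^{1-r'}$. Since $(L^r(w\,d\mu_\alpha))^{*}=L^{r'}(\sigma\,d\mu_\alpha)$ under the unweighted pairing, duality produces a nonnegative $\phi$ with $\int_0^\infty\phi^{r'}\,\sigma\,d\mu_\alpha\le1$ and
\[
\Big(\int_0^\infty g^q\,w\,d\mu_\alpha\Big)^{p_0/q}=\int_0^\infty g^{p_0}\,\phi\,d\mu_\alpha .
\]
I would then manufacture a weight $V$ with $V\ge\phi$, with $V\in A_{p_0}^\alpha$ whose constant is bounded independently of $\phi$, and with $\|V\|_{L^{r'}(\sigma)}\le2$. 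The hypothesis applied to $V$ then gives, with a uniform constant,
\[
\int_0^\infty g^{p_0}\,\phi\,d\mu_\alpha\le\int_0^\infty g^{p_0}\,V\,d\mu_\alpha\le C\int_0^\infty f^{p_0}\,V\,d\mu_\alpha ,
\]
and H\"older's inequality bounds the right side by $C\big(\int_0^\infty f^q\,w\,d\mu_\alpha\big)^{p_0/q}\|V\|_{L^{r'}(\sigma)}\le2C\big(\int_0^\infty f^q\,w\,d\mu_\alpha\big)^{p_0/q}$. Taking $p_0/q$-th powers closes this case.

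Manufacturing such a $V$ is the heart of the matter and the step I expect to be the main obstacle. The natural tool is the iteration of (iii), which produces from $\phi$ a dominating $A_1^\alpha$ weight, and every $A_1^\alpha$ weight lies in $A_{p_0}^\alpha$ with a smaller constant. The obstruction is that, to run the iteration in $L^{r'}(\sigma)$ and obtain a weight that is $A_1^\alpha$ with respect to $d\mu_\alpha$, one would want $M$ bounded on $L^{r'}(\sigma)$, i.e. $\sigma=w^{1-r'}\in A_{r'}^\alpha$; but this amounts to $w\in A_{q/p_0}^\alpha$, which is strictly stronger than the available $w\in A_q^\alpha$ and stronger even than what the reverse H\"older self-improvement of $A_q^\alpha$ provides. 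Circumventing this requires the more delicate construction of the extrapolation weight --- combining an iterate taken with respect to an adapted maximal operator with a carefully chosen power of $w$, with the exponents matched precisely to $q$ and $p_0$ so that the product lands exactly in $A_{p_0}^\alpha$ with a constant uniform in $\phi$. This is precisely the technical core carried out in the references cited with the statement, and it is where I expect essentially all the difficulty to reside.

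The range $q<p_0$ I would handle by the dual form of the same argument: replacing $w$ by the dual weight $w^{1-q'}\in A_{q'}^\alpha$ and passing to conjugate exponents recasts the estimate in the regime already treated relative to $p_0$, after which the iteration argument applies. Combining the three ranges $q=p_0$, $q>p_0$ and $q<p_0$ gives the inequality for every $q\in(1,\infty)$ and every $w\in A_q^\alpha$, which is the asserted extrapolation theorem.
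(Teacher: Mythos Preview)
The paper does not prove this theorem at all: it is quoted verbatim as a known result, with the parenthetical attribution ``(Theorem 3.1 in \cite{Du1}, Proposition 3.3 in \cite{CR})'', and is then immediately applied to deduce Theorem~\ref{tuniform}. So there is no ``paper's own proof'' to compare your attempt against; the authors treat this extrapolation theorem as a black box imported from Duoandikoetxea and from Ciaurri--Roncal.

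Your outline is a faithful sketch of the Rubio de Francia extrapolation machinery in a space of homogeneous type, and you correctly isolate the genuine obstacle: the na\"ive iteration $\mathcal R\phi$ in $L^{r'}(\sigma)$ would require $\sigma=w^{1-r'}\in A_{r'}^\alpha$, i.e.\ $w\in A_{q/p_0}^\alpha$, which is not available. You are right that the fix is the more delicate construction (multiplying by a suitable power of $w$, or equivalently iterating an adapted maximal operator such as $M'h=M(hw)/w$), and that this is exactly what the cited references carry out. One small caution: your treatment of the range $q<p_0$ by ``replacing $w$ by the dual weight $w^{1-q'}$ and passing to conjugate exponents'' is not quite a duality argument in the usual sense, since the hypothesis concerns a fixed pair of nonnegative functions $(f,g)$ rather than an operator and its adjoint; the case $q<p_0$ is handled in the references by a construction symmetric to, but not literally dual to, the $q>p_0$ case. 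This is a minor point given that you have already (appropriately) deferred the technical core to the cited sources, which is precisely what the paper itself does.
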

We can now easily deduce our theorem. The uniform boundedness of
$T_m$ on $L^{p_0}(\R^+,w \ d\mu_{\frac{n}{2}-1})$ can be used
along with the above theorem to give
$$\int\limits_0^\infty |T_m f(r)|^q w(r)\  d\mu_{\frac{n}{2}-1} \leq C(w)\int\limits_0^\infty |f(r)|^q w(r) \ d\mu_{\frac{n}{2}-1}$$
for any $1<q<\infty.$ If we take
$$g = \left( \sum
\limits_{m=0}^{\infty} \ \sum \limits_{j=1}^{d(m)}\ |T_m
f_{m,j}|^2 \right)^\frac{1}{2}, \ f=\left( \sum
\limits_{m=0}^{\infty}\  \sum \limits_{j=1}^{d(m)}\ |f_{m,j}|^2
\right)^\frac{1}{2},$$ then with $p=2$
$$\int\limits_0^\infty g(r) ^2 w(r) \ d\mu_{\frac{n}{2}-1} \leq C \int\limits_0^\infty f(r) ^2 w(r)\  d\mu_{\frac{n}{2}-1}$$
holds. By Theorem \ref{xtrapoln} we get
$$\int\limits_0^\infty g(r) ^p w(r) \ d\mu_{\frac{n}{2}-1} \leq C \int\limits_0^\infty f(r) ^p w(r)\  d\mu_{\frac{n}{2}-1}$$
for any $1<p<\infty, \ w\in A_p^{\frac{n}{2}-1}(\R^+).$ This
completes the proof.

\section{Hermite and Laguerre semigroups} \setcounter{equation}{0}

In this section we apply the results of the previous section to
study mixed weighted norm inequalities for the Hermite semigroup.
The Hermite semigroup $e^{-tH}, \ t>0,$ generated by the Hermite
operator $H=-\Delta +|x|^2$ on $\R^n$ is an integral operator
whose kernel $K_t(x,y)$ is explicitly known. Since the normalised
Hermite functions $\Phi_\alpha(x), \alpha \in \N^n$ are
eigenfunctions of $H$ with eigenvalues $(2|\alpha|+n)$ it follows
that
$$K_t(x,y) = \sum_{\alpha\in \N^n} e^{-(2|\alpha|+n)t} \ \Phi_\alpha(x) \Phi_\alpha(y).$$
In view of the Mehler's formula (Lemma 1.1.1, \cite{Th}) we see
that
$$K_t(x,y) = \pi^{-\frac{n}{2}} (\sinh 2t)^{-\frac{n}{2}} e^{-\frac{1}{2} (\coth 2t)(|x|^2+|y|^2)+(\csch 2t)x\cdot y}.$$
For each $t>0, \ e^{-tH}$ defines a bounded operator on
$L^p(\R^n), \ 1\leq ~p\leq ~\infty.$ The kernel of the operator
$e^{-tH}$ can be factorised as
$$K_t(x,y) = \pi^{-\frac{n}{2}} \ (\sinh 2t)^{-\frac{n}{2}} e^{-\frac{1}{4} (\coth t)|x-y|^2} e^{-\frac{1}{4} (\tanh t) |x+y|^2}$$
and hence we can make use of the results of the previous section
to prove mixed norm estimates. If $T = e^{-tH}$ then the operator
$T_m$ defined in the previous section with kernel $K_t(x,y) \
P_m^{\frac{n}{2}-1} (x'\cdot y')$ turns out to be a Laguerre
semigroup.

For each $\alpha>-1,$ let $L_k^\alpha(r), \ r>0$ be the Laguerre
polynomials of type $\alpha$ and define the normalised Laguerre
functions $\psi_k^\alpha(r)$ by
$$\psi_k^\alpha(r) = \left( \frac{2\Gamma(k+1)}{\Gamma(k+\alpha+1)}\right)^{\frac{1}{2}} \ L_k^\alpha(r^2) e^{-\frac{1}{2}r^2}.$$
Then $\psi_k^\alpha, \ k=0,1,2, \cdots$ forms an orthonormal basis
for $L^2(\R^+, d\mu_\alpha)$ where $d\mu_\alpha(r) = r^{2\alpha+1}
dr.$ Moreover, $\psi_k^\alpha$ are eigenfunctions of the Laguerre
operator
$$L_\alpha = - \frac{d^2}{dr^2}+r^2 -\frac{2\alpha+1}{r} \frac{d}{dr}$$
with eigenvalues $(4k+2\alpha+2).$ The Laguerre semigroup
generated by $L_\alpha$ is again an integral operator whose kernel
is given by
$$K_t^\alpha (r,s) = \sum \limits_{k=0}^{\infty} e^{-(4k+2\alpha+2)} \psi_k^\alpha(r) \ \psi_k^\alpha(s).$$
Using the generating function identity((1.1.47) in \cite{Th}) we
can easily see that
$$K_t^\alpha(r,s)= (\sinh 2t)^{-1} \ e^{-\frac{1}{2} (\coth 2t)(r^2+s^2)} \ (rs)^{-\alpha} \ I_\alpha(rs \csch 2t),$$
where $I_\alpha$ are the modified Bessel functions, $I_\alpha(z)=
e^{-i\frac{\pi}{2}\alpha} J_\alpha(iz).$ We note that for
$\alpha>-\frac{1}{2},$
$$J_\alpha(z) = \frac{(z/2)^\alpha}{\Gamma(\frac{1}{2}) \Gamma(\alpha+1)} \int\limits_{-1}^1 e^{izt} \ (1-t^2)^{\alpha-\frac{1}{2}} \ dt$$
and consequently,
$$I_\alpha(z) = \frac{(z/2)^\alpha}{\Gamma(\frac{1}{2}) \Gamma(\alpha+1)} \int\limits_{-1}^1 e^{zt} \ (1-t^2)^{\alpha-\frac{1}{2}} \ dt.$$
We will make use of these formulas in the sequel. We denote the
Laguerre semigroup $e^{-tL_\alpha}$ by $T_t^\alpha.$

The Hermite and Laguerre semigroups are intimately connected.
Consider $T_m$ applied to a radial function $f\in
L^p(\R^+,r^{n-1}dr).$ Then
$$T_mf(r) = \int\limits_0^\infty f(s) \left( \ \int\limits_{S^{n-1}} K_t(rx',sy') \ P_m^{\frac{n}{2}-1} \hspace{-0.1cm}(x'\cdot y') \ dy'\right)
s^{n-1}\ ds.$$ We claim that $T_m f(r) = C_n \ r^m\
T_t^{\frac{n}{2}+m-1} \widetilde{f}(r)$ where
$\widetilde{f}(r)=r^{-m} f(r).$ This follows immediately once we
show that
$$\int\limits_{S^{n-1}} K_t(rx',sy')\  P_m^{\frac{n}{2}-1} \hspace{-0.1cm}(x'\cdot y')\  dy' = C_n\  r^m s^m K_t^{\frac{n}{2}+m-1}(r,s)$$
By Funk-Hecke formula the above kernel is given by
$$\int\limits_{-1}^1 K_t(r,s;u) \ P_m^{\frac{n}{2}-1} \hspace{-0.1cm}(u) \ (1-u^2)^{\frac{n-3}{2}}\ du.$$
where $$K_t(r,s;u) = \pi^{-\frac{n}{2}} \ (\sinh
2t)^{-\frac{n}{2}} e^{-\frac{1}{4} (\coth t)q_-(r,s;u)}
e^{-\frac{1}{4} (\tanh t) q_+(r,s;u)}$$ Recalling the explicit
form of the kernel $K_t(x,y),$ all we need is the result from the
following lemma.

\begin{lem}
For any $z\in \C$
$$\int\limits_{-1}^1 e^{zu} P_m^{\frac{n}{2}-1}\hspace{-0.1cm}(u) \ (1-u^2)^{\frac{n-3}{2}} \ du =
\Gamma\left(\frac{1}{2}\right) \ \Gamma\left(\frac{n-1}{2}\right)
\ \left(\frac{z}{2}\right)^{-\frac{n}{2}+1} \ I_{\frac{n}{2}+m-1}
(z).$$
\end{lem}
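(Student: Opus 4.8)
The plan is to recognise this integral as the Funk--Hecke coefficient of the plane wave $e^{zu}$ and to evaluate it by reducing to the classical Gegenbauer--Bessel formula, then analytically continuing from the ordinary Bessel function $J$ to the modified Bessel function $I$. For $m=0$ the claim is precisely the Poisson representation of $I_{\frac{n}{2}-1}$ already recorded above, so what is really being proved is the extension of that representation to include the ultraspherical factor $P_m^{\frac{n}{2}-1}$; this is exactly the analytic continuation of the identity underlying the Hecke--Bochner formula (Theorem 3.4.1 in \cite{Th}).

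First I would set $\nu=\frac{n}{2}-1$, so the weight is $(1-u^2)^{\nu-\frac{1}{2}}$ and $P_m^{\frac{n}{2}-1}=C_m^{\frac{n}{2}-1}(1)^{-1}\,C_m^{\frac{n}{2}-1}$ with $C_m^{\frac{n}{2}-1}(1)=\frac{\Gamma(m+n-2)}{m!\,\Gamma(n-2)}$. The starting point is the classical formula
$$\int\limits_{-1}^1 e^{izu}\,C_m^{\nu}(u)\,(1-u^2)^{\nu-\frac{1}{2}}\,du=\frac{\pi\,2^{1-\nu}\,\Gamma(m+2\nu)}{m!\,\Gamma(\nu)}\,i^{m}\,z^{-\nu}\,J_{m+\nu}(z).$$
Both sides are entire functions of the complex argument, the right-hand side because $z^{-\nu}J_{m+\nu}(z)$ differs from an even entire function only by the factor $z^{m}$; hence the identity persists for all complex values of the argument.

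Next I would pass to $I$. Writing $w$ for a positive real argument and substituting $z=-iw$, the companion relation $J_{m+\nu}(-iw)=e^{-i\pi(m+\nu)/2}I_{m+\nu}(w)$ (valid on the principal branch for real positive $w$, consistent with $I_\alpha(z)=e^{-i\frac{\pi}{2}\alpha}J_\alpha(iz)$ used above) together with $(-iw)^{-\nu}=e^{i\pi\nu/2}w^{-\nu}$ and $i^{m}=e^{i\pi m/2}$ makes every phase factor cancel, leaving
$$\int\limits_{-1}^1 e^{wu}\,C_m^{\nu}(u)\,(1-u^2)^{\nu-\frac{1}{2}}\,du=\frac{\pi\,2^{1-\nu}\,\Gamma(m+2\nu)}{m!\,\Gamma(\nu)}\,w^{-\nu}\,I_{m+\nu}(w).$$
The one delicate point is exactly this phase bookkeeping across the branch of the modified Bessel function: once the three exponents are tracked correctly they sum to zero, so no residual power of $i$ survives and the right-hand side is manifestly real for real $w$, as it must be.

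Finally I would divide by $C_m^{\frac{n}{2}-1}(1)$ and collapse the Gamma factors. Dividing cancels $\Gamma(m+2\nu)$ and $m!$ and leaves the constant $\frac{\pi\,2^{1-\nu}\,\Gamma(2\nu)}{\Gamma(\nu)}$; the duplication formula $\Gamma(2\nu)=\pi^{-1/2}2^{2\nu-1}\Gamma(\nu)\,\Gamma(\nu+\tfrac{1}{2})$ then reduces it to $\sqrt{\pi}\,2^{\nu}\,\Gamma(\nu+\tfrac{1}{2})$. Absorbing $2^{\nu}w^{-\nu}=(w/2)^{-\nu}$ and substituting back $\nu=\frac{n}{2}-1$, so that $\sqrt{\pi}=\Gamma(\tfrac{1}{2})$, $\Gamma(\nu+\tfrac{1}{2})=\Gamma(\tfrac{n-1}{2})$, $(w/2)^{-\nu}=(w/2)^{-\frac{n}{2}+1}$ and $I_{m+\nu}=I_{\frac{n}{2}+m-1}$, yields exactly the asserted identity. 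I expect the only real obstacle to be keeping the constant and the branch of $I$ straight through the continuation; the concluding Gamma reduction is routine.
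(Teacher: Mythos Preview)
Your argument is correct, but the paper proceeds differently. You import the classical Gegenbauer--Bessel identity $\int_{-1}^{1} e^{izu}C_m^{\nu}(u)(1-u^2)^{\nu-\frac12}\,du = \pi\,2^{1-\nu}\,\Gamma(m+2\nu)\,(m!\,\Gamma(\nu))^{-1}\,i^{m}z^{-\nu}J_{m+\nu}(z)$, analytically continue $z\mapsto -iw$ with careful phase bookkeeping, divide by $C_m^{\nu}(1)$, and finish with the duplication formula. The paper instead stays self-contained: it applies the Rodrigues formula $(1-t^2)^{\frac{n-3}{2}}P_m^{\frac{n}{2}-1}(t)=\frac{(-1)^m}{2^m(\frac{n-1}{2})_m}\bigl(\frac{d}{dt}\bigr)^m(1-t^2)^{\frac{n-3}{2}+m}$, integrates by parts $m$ times to throw the derivatives onto $e^{zu}$, and then recognises the resulting integral $\int_{-1}^{1}e^{zu}(1-u^2)^{\frac{n-3}{2}+m}\,du$ as the Poisson representation of $I_{\frac{n}{2}+m-1}$ already recorded in the text. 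The paper's route is more elementary in that it uses only Rodrigues and the basic Poisson integral rather than the full Gegenbauer--Bessel formula, and it sidesteps all branch and phase issues since one works with $e^{zu}$ directly; your route is shorter once the Gegenbauer--Bessel identity is granted, at the cost of the analytic-continuation step and the duplication-formula reduction.
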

\noindent\begin{proof} The function
$P_m^{\lambda}(t)=C_m^\lambda(t) C_m^\lambda(1)^{-1}$ is given by
the Rodrigues type formula
$$(1-t^2)^{\frac{n-3}{2}} \ P_m^{\frac{n}{2}-1} (t) = \frac{(-1)^m}{2^m (\frac{n-1}{2})_m} \ \left(\frac{d}{dt}\right)^m (1-t^2)^{\frac{n-3}{2}+m}.$$
In view of this formula
$$\int\limits_{-1}^{1} e^{zu} P_m^{\frac{n}{2}-1} (u) \ (1-u^2)^{\frac{n-3}{2}} \ du =
\frac{(-1)^m}{2^m (\frac{n-1}{2})_m} \int\limits_{-1}^{1} e^{zu}
\left(\frac{d}{du}\right)^m (1-u^2)^{\frac{n-3}{2}+m} \ du.$$
Integrating by parts we see that
\begin{eqnarray*}
&&\int\limits_{-1}^{1} e^{zu} \left(\frac{d}{du}\right)^m
(1-u^2)^{\frac{n-3}{2}+m} \ du\\
&& \ \ \ \ = (-1)^m z^m \int\limits_{-1}^{1}
e^{zu} (1-u^2)^{\frac{n-3}{2}+m} \ du\\
&& \ \ \ \ = (-1)^m \ 2^{m+\frac{n}{2}-1}\  z^{-\frac{n}{2}+1} \
\Gamma\left(\frac{1}{2}\right)\ \Gamma\left(\frac{n-1}{2}+m\right)
I_{\frac{n}{2}+m-1}(z).
\end{eqnarray*}
Since
$\frac{\Gamma(\frac{n-1}{2}+m)}{(\frac{n-1}{2})_m}=\Gamma(\frac{n-1}{2})$
we get the lemma.
\end{proof}
If we recall the definition of the kernels
$K_t^{\frac{n}{2}+m-1}(r,s)$ and combine it with the result of the
above lemma we immediately obtain the following:
\begin{prop}\label{Laguerre}
When $f(x) = f_{m,j}(r) Y_{m,j}(\omega),$ we have
$$e^{-tH} f(r\omega) = c_n \ r^m \ Y_{m,j}(\omega) \ T_t^{\frac{n}{2}+m-1} \widetilde{f}_{m,j}(r)$$
where $c_n = \Gamma(\frac{1}{2}) \Gamma(\frac{n-1}{2})
2^{\frac{n}{2}-1}$ and $\widetilde{f}_{m,j}(s) = s^{-m}
f_{m,j}(s).$
\end{prop}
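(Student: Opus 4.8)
The plan is to substitute the product form of $f$ directly into the integral defining $e^{-tH}$ and to collapse the resulting angular integral by the Funk--Hecke formula, exactly as in the derivation preceding the statement. First I would write, with $x=r\omega$ and $y=s\omega'$,
$$e^{-tH}f(r\omega)=\int_0^\infty f_{m,j}(s)\left(\int_{S^{n-1}}K_t(r\omega,s\omega')\,Y_{m,j}(\omega')\,d\omega'\right)s^{n-1}\,ds.$$
For fixed $r,s$ the kernel $K_t(r\omega,s\omega')$ depends on $\omega\cdot\omega'$ alone, so Funk--Hecke produces the factor $Y_{m,j}(\omega)$ and replaces the inner integral by
$$\int_{-1}^1 K_t(r,s;u)\,P_m^{\frac{n}{2}-1}(u)\,(1-u^2)^{\frac{n-3}{2}}\,du,$$
which is precisely the one--dimensional integral already analysed above the statement.

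Next I would evaluate that scalar integral. Using the factored kernel together with the hyperbolic identities $\coth t+\tanh t=2\coth 2t$ and $\coth t-\tanh t=2\csch 2t$, the kernel collapses to
$$K_t(r,s;u)=\pi^{-\frac{n}{2}}(\sinh 2t)^{-\frac{n}{2}}\,e^{-\frac{1}{2}(\coth 2t)(r^2+s^2)}\,e^{(\csch 2t)rsu}.$$
Pulling the $u$--independent factors out front and applying the Lemma with $z=(\csch 2t)rs$ turns the integral into a constant multiple of $z^{-\frac{n}{2}+1}I_{\frac{n}{2}+m-1}(z)$. Comparing with the explicit form of the Laguerre kernel $K_t^{\frac{n}{2}+m-1}(r,s)$, the surviving powers of $\sinh 2t$ cancel, the Gaussian factors cancel, and the powers of $rs$ combine to produce a constant multiple of $(rs)^m\,K_t^{\frac{n}{2}+m-1}(r,s)$. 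This is exactly the kernel identity stated just before the Proposition, and it is from here that the explicit value of $c_n$ is read off.

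Finally I would reassemble and reconcile the two measures. Substituting the kernel identity back gives
$$e^{-tH}f(r\omega)=c_n\,r^m\,Y_{m,j}(\omega)\int_0^\infty f_{m,j}(s)\,K_t^{\frac{n}{2}+m-1}(r,s)\,s^{n+m-1}\,ds,$$
where I have absorbed the factor $s^m$ coming from the kernel identity into $s^{n-1}$. On the other hand, with $\alpha=\frac{n}{2}+m-1$ the Laguerre measure is $d\mu_\alpha(s)=s^{2\alpha+1}\,ds=s^{n+2m-1}\,ds$, so that
$$T_t^{\frac{n}{2}+m-1}\widetilde f_{m,j}(r)=\int_0^\infty K_t^{\frac{n}{2}+m-1}(r,s)\,s^{-m}f_{m,j}(s)\,s^{n+2m-1}\,ds=\int_0^\infty K_t^{\frac{n}{2}+m-1}(r,s)\,f_{m,j}(s)\,s^{n+m-1}\,ds,$$
and the two integrals coincide. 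The only delicate point, and the one I would watch most carefully, is precisely this bookkeeping of weights: the radial Lebesgue measure $s^{n-1}\,ds$ on the Hermite side must be matched against $d\mu_{\frac{n}{2}+m-1}$ on the Laguerre side, and it is exactly the factor $s^m$ extracted from the kernel together with the definition $\widetilde f_{m,j}(s)=s^{-m}f_{m,j}(s)$ that reconciles the exponents and yields the stated formula.
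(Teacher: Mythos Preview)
Your proof is correct and follows essentially the same approach as the paper: reduce the angular integral via Funk--Hecke to the one-dimensional integral $\int_{-1}^1 K_t(r,s;u)P_m^{\frac{n}{2}-1}(u)(1-u^2)^{\frac{n-3}{2}}\,du$, evaluate it by the Lemma with $z=(\csch 2t)rs$, and match against the explicit Laguerre kernel $K_t^{\frac{n}{2}+m-1}(r,s)$. The only cosmetic difference is that the paper phrases the computation as evaluating the auxiliary operator $T_m$ on a radial input (tying it back to the Section~2 framework), whereas you apply $e^{-tH}$ directly to $f_{m,j}(r)Y_{m,j}(\omega)$ and use Funk--Hecke in its $Y_{m,j}$ form; the resulting scalar integral and the subsequent bookkeeping of the $(rs)^m$ factor against the Laguerre measure $d\mu_{\frac{n}{2}+m-1}$ are identical.
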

\begin{rem}
The above proposition can also be proved directly using
Hecke-Bochner formula for the Hermite projections (see Theorem
3.4.1 in \cite{Th}). However, the above approach comes in handy
when we try to prove uniform estimates for the family of operators
taking $f_{m,j}$ into $r^m T_t^{\frac{n}{2}+m-1}
\widetilde{f}_{m,j}.$
\end{rem}

We are now ready to prove the following mixed weighted norm
inequality for the Hermite semigroup. Since
$P_m^{\frac{n}{2}-1}\hspace{-0.1cm}(u)$ is bounded it follows that
$|T_mf(r)| \leq T_0|f|(r)=T_t^{\frac{n}{2}-1}|f|(r).$ From the
work \cite{KT} we know that the Laguerre semigroup $T_t^\alpha$
satisfies the weighted norm inequality
$$\int\limits_0^\infty |T_t^\alpha f(r)|^p \ w(r)\ d\mu_\alpha \leq C \int\limits_0^\infty |f(r)|^p \ w(r)\  d\mu_\alpha$$
for all $w\in A_{p,loc}^\alpha(\R^+), \ 1\leq p <\infty.$ Here
$A_{p,loc}^\alpha(\R^+)$ is defined as the set of all positive
weight functions satisfying
$$\left( \frac{1}{\mu_\alpha(Q)} \int\limits_Q w(r)\  d\mu_\alpha \right)
 \left(\frac{1}{\mu_\alpha(Q)} \int\limits_Q w(r)^{-\frac{1}{p-1}} \ d\mu_\alpha\right)^{p-1}\leq C$$
 for all $Q \subset \R^+$ of length less than or equal to one.
 Combining this with the classical theorem of Marcinkiewicz and
 Zygmund \cite{MZ} mentioned already we get the following.

 \begin{thm}
 The mixed norm inequalities
 $$\|e^{-tH}f\|_{L^{p,2}(w)} \leq C \|f\|_{L^{p,2}(w)}$$
hold for all $w\in A_{p,loc}^{\frac{n}{2}-1}, \ 1\leq p<\infty.$
 \end{thm}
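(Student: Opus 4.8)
The plan is to imitate the proof of Theorem \ref{ttilde}, replacing the domination by $\widetilde T$ with the pointwise domination by the Laguerre semigroup $T_t^{\frac{n}{2}-1}$ and the plain Marcinkiewicz--Zygmund argument with its weighted counterpart. First I would expand $e^{-tH}f$ in spherical harmonics exactly as in Theorem \ref{ttilde}. With $f_{m,j}(s) = \int_{S^{n-1}} f(s\omega')Y_{m,j}(\omega')\,d\omega'$, the same Funk--Hecke computation gives
$$e^{-tH}f(r\omega) = \sum_{m=0}^\infty \sum_{j=1}^{d(m)} T_m f_{m,j}(r)\, Y_{m,j}(\omega),$$
so that by orthonormality of the $Y_{m,j}$ on $S^{n-1}$,
$$\int_{S^{n-1}} |e^{-tH}f(r\omega)|^2\, d\omega = \sum_{m=0}^\infty \sum_{j=1}^{d(m)} |T_m f_{m,j}(r)|^2.$$
Raising to the power $p/2$ and integrating against $w(r)\,r^{n-1}\,dr$, and noting that this radial weight equals $w\,d\mu_{\frac{n}{2}-1}$ since $2(\tfrac{n}{2}-1)+1=n-1$, the asserted inequality $\|e^{-tH}f\|_{L^{p,2}(w)} \le C\|f\|_{L^{p,2}(w)}$ becomes equivalent to the $\ell^2$-valued weighted estimate
$$\int_0^\infty \Big(\sum_{m,j} |T_m f_{m,j}(r)|^2\Big)^{\frac{p}{2}} w(r)\, d\mu_{\frac{n}{2}-1} \le C \int_0^\infty \Big(\sum_{m,j} |f_{m,j}(r)|^2\Big)^{\frac{p}{2}} w(r)\, d\mu_{\frac{n}{2}-1}.$$

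Next I would remove the dependence on $m$. Since the Hermite kernel $K_t$ is positive and $|P_m^{\frac{n}{2}-1}| \le 1$, for each radial $f_{m,j}$ one has the pointwise bound $|T_m f_{m,j}(r)| \le T_t^{\frac{n}{2}-1}|f_{m,j}|(r)$ already recorded above. Hence
$$\Big(\sum_{m,j}|T_m f_{m,j}(r)|^2\Big)^{\frac12} \le \Big(\sum_{m,j}\big(T_t^{\frac{n}{2}-1}|f_{m,j}|(r)\big)^2\Big)^{\frac12},$$
and it suffices to prove the $\ell^2$-valued weighted inequality for the single operator $S = T_t^{\frac{n}{2}-1}$ on the measure space $(\R^+, w\,d\mu_{\frac{n}{2}-1})$.

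Finally I would invoke the scalar weighted estimate from \cite{KT}, which states that for $w \in A_{p,loc}^{\frac{n}{2}-1}(\R^+)$ the operator $S$ is bounded on $L^p(w\,d\mu_{\frac{n}{2}-1})$. The Marcinkiewicz--Zygmund theorem \cite{MZ} promotes any bounded linear operator on $L^p$ of a measure to a bounded operator on the $\ell^2$-valued $L^p$ of the same measure; applying it to $S$ over the measure $w\,d\mu_{\frac{n}{2}-1}$ yields exactly the vector-valued inequality needed above and completes the argument.

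The step I expect to be most delicate is the application of Marcinkiewicz--Zygmund in the weighted setting: one must be sure that its constant is independent of the weight $w$. This is indeed the case, because the classical theorem holds over an arbitrary measure space with an absolute constant depending only on $p$, so the only weight dependence enters through the scalar operator norm of $S$ supplied by \cite{KT}. Everything else is the bookkeeping of the spherical-harmonic expansion, which is identical to that in Theorem \ref{ttilde}.
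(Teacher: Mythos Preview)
Your proposal is correct and follows essentially the same route as the paper: reduce via the spherical-harmonic expansion and the domination $|T_m f_{m,j}|\le T_t^{\frac{n}{2}-1}|f_{m,j}|$ to an $\ell^2$-valued inequality for the single Laguerre semigroup, then combine the scalar weighted bound from \cite{KT} with Marcinkiewicz--Zygmund. The only cosmetic difference is that the paper absorbs the weight into an auxiliary operator $Sf=T_t^{\frac{n}{2}-1}(Rf\cdot w^{-1/p})\,w^{1/p}$ so as to apply Marcinkiewicz--Zygmund on unweighted $L^p(\R^n)$, whereas you apply it directly on $L^p(\R^+,w\,d\mu_{\frac{n}{2}-1})$; as you correctly observe, the Marcinkiewicz--Zygmund constant depends only on $p$, so the two formulations are equivalent.
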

Indeed, let $R$ be the radialisation operator defined by
$$Rf(r) = \int\limits_{S^{n-1}} f(r\omega)\  d\omega.$$
Defining $Sf(x) = T_t^{\frac{n}{2}-1} (Rf \cdot w^{-1/p})(x) \
w(|x|)^{\frac{1}{p}}$ we see that
\begin{eqnarray*}
\int \limits_{\R^n} |Sf(x)|^p \ dx &=& C \int\limits_0^\infty
|T_t^{\frac{n}{2}-1}(Rf
\cdot w^{-1/p})(r)|^p\  w(r)\  d\mu_{\frac{n}{2}-1}\\
&\leq& C \int\limits_0^\infty |Rf (r)|^p \ r^{n-1} \ dr.
\end{eqnarray*}
Since
$$|Rf(r)|^p \leq \left(\ \int\limits_{S^{n-1}} d\omega\right)^{\frac{p}{p'}} \int\limits_{S^{n-1}} |f(r\omega)|^p\  d\omega$$
it follows that
$$\int\limits_{\R^n} |Sf(x)|^p dx \leq C \int\limits_{\R^n} |f(x)|^p
dx.$$ By the theorem of Marcinkiewicz and Zygmund we have
$$\int\limits_{\R^n} (\sum\limits_{j=0}^\infty \ |Sf_j(x)|^2)^\frac{p}{2} \ dx \leq
 C \int\limits_{\R^n} (\sum\limits_{j=0}^\infty\  |f_j(x)|^2)^\frac{p}{2}\ dx$$
 for any sequence $f_j \in L^p(\R^n).$ Applying this to the radial
 functions $g_{m,j}(x) = f_{m,j}(|x|) w(|x|)^\frac{1}{p}$ where
 $f_{m,j} \in L^p(\R^+,w(r)\ d\mu_{\frac{n}{2}-1})$ we obtain
\begin{eqnarray*}
&&\hspace{-1cm}\int \limits_0^\infty \left(\sum
\limits_{m=0}^{\infty} \ \sum \limits_{j=1}^{d(m)}\
|T_t^{\frac{n}{2}-1} f_{m,j}(r)|^2 \right)^\frac{p}{2}
w(r)\  d\mu_{\frac{n}{2}-1} \\
&&\hspace{1cm}\leq C\int \limits_0^\infty \left( \sum
\limits_{m=0}^{\infty}\  \sum \limits_{j=1}^{d(m)}\ |f_{m,j}(r)|^2
\right)^\frac{p}{2} w(r)\  d\mu_{\frac{n}{2}-1}.
\end{eqnarray*} This proves the theorem.

\begin{rem}
In view of Proposition \ref{Laguerre} the main result can be
restated as the following vector valued inequality for the
Laguerre semigroups $T_t^{\frac{n}{2}+m-1}:$ for $1\leq p\leq
\infty, w\in A_{p,loc}^{\frac{n}{2}-1}$ we have
\begin{eqnarray*}
&&\hspace{-1cm}\int \limits_0^\infty \left(\sum
\limits_{m=0}^{\infty} \ \sum \limits_{j=1}^{d(m)}\  r^{2m}\
|T_t^{\frac{n}{2}+m-1} \widetilde{f}_{m,j}(r)|^2
\right)^\frac{p}{2} w(r)\
d\mu_{\frac{n}{2}-1}\\
 &&\hspace{1cm}\leq C_{n,t} \int \limits_0^\infty \left( \sum
\limits_{m=0}^{\infty} \ \sum \limits_{j=1}^{d(m)}\ |f_{m,j}(r)|^2
\right)^\frac{p}{2} w(r) \ d\mu_{\frac{n}{2}-1}.
\end{eqnarray*}
\end{rem}

Using Theorem 2.1 we can also prove mixed norm estimates for Riesz
means associated to Hermite expansions on $\R^n, \ n\geq 2.$ For
$\delta\geq 0$ we define the Riesz means of order $\delta$ by
$$S_R^\delta f(x) = \sum \left( 1-\frac{2k+n}{R}\right)_+^\delta \ P_kf(x)
$$
where $P_k$ are the Hermite projection operators defined by
$$P_k f(x) = \int\limits_{\R^n} \Phi_k(x,y) \ f(y) \ dy$$
with $\Phi_k(x,y) = \sum\limits_{|\alpha| =k} \Phi_\alpha(x)
\Phi_\alpha(y), \ \Phi_\alpha$ being the normalised Hermite
functions on $\R^n.$ The basic result for the Riesz means is the
following: if $\delta>\frac{n-1}{2},$ the operator $S_R^\delta$
are all uniformly bounded on $L^p(\R^n),\\ 1 \leq p\leq
\infty.$(Theorem 3.3.2 in \cite{Th}). This follows from the fact
that (see Theorem 3.3.1 \cite{Th})
$$\sup \limits_{x\in \R^n} \int\limits_{\R^n} |s_R^\delta(x,y)| \ dy\leq C$$
where $C$ is independent of $R.$ Here $s_R^\delta(x,y)$ is the
kernel associated to $S_R^\delta,$ viz.,
$$s_R^\delta(x,y) = \sum\limits_{k}  \left( 1-\frac{2k+n}{R}\right)_+^\delta \ \Phi_k(x,y).$$
It follows that the operator $\widetilde{S}_R^\delta$ given by
$$\widetilde{S}_R^\delta f(x) = \int\limits_{\R^n}
|s_R^\delta(x,y)| \ f(y)\ dy$$ is also bounded on $L^p(\R^n), \
1\leq p\leq \infty$ for $\delta>\frac{n-1}{2}.$ Using this fact we
can easily obtain the following mixed norm estimates for
$S_R^\delta.$

\begin{thm}
Let $n\geq 2$ and $\delta >\frac{n-1}{2}.$ Then we have the
uniform estimates
$$\|S_R^\delta f\|_{L^{p,2}(\R^n)} \leq C \|f\|_{L^{p,2}(\R^n)}.$$
 for all $f\in L^p(\R^n),
1\leq p\leq \infty.$ (Here $C$ is independent of $R).$
\end{thm}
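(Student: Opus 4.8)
The plan is to recognize $S_R^\delta$ as an operator of exactly the type treated in Theorem~\ref{ttilde} and then feed in the uniform $L^p$-boundedness of the majorized operator $\widetilde{S}_R^\delta$, which the preceding discussion has already secured. Concretely I would proceed in three steps, the first being the only one that requires genuine thought.

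First I would verify that the kernel $s_R^\delta(x,y)$ has the form $K_0(|x-y|,|x+y|)$ demanded by Theorem~\ref{ttilde}. The crux is that each Hermite projection kernel $\Phi_k(x,y)=\sum_{|\alpha|=k}\Phi_\alpha(x)\Phi_\alpha(y)$ depends on $(x,y)$ only through $|x-y|$ and $|x+y|$ (equivalently through $|x|^2+|y|^2$ and $x\cdot y$, which are linear combinations of $|x\pm y|^2$). This can be read off from Mehler's formula: setting $w=e^{-2t}$, the kernel $K_t(x,y)=\pi^{-\frac{n}{2}}(\sinh 2t)^{-\frac{n}{2}}e^{-\frac14(\coth t)|x-y|^2}e^{-\frac14(\tanh t)|x+y|^2}$ is, for each fixed $t$, a function of $|x-y|$ and $|x+y|$ alone, while as a power series it equals $w^{\frac{n}{2}}\sum_k w^{k}\Phi_k(x,y)$ up to a constant; comparing coefficients in $w$ shows that every $\Phi_k(x,y)$ inherits this structure. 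Since the cutoff $(1-\frac{2k+n}{R})_+^\delta$ vanishes once $2k+n\ge R$, the sum defining $s_R^\delta$ is finite, so $s_R^\delta(x,y)=K_0(|x-y|,|x+y|)$ with $K_0$ a finite linear combination of the corresponding profiles.

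Next I would record that $\widetilde{S}_R^\delta$, the operator with nonnegative kernel $|s_R^\delta(x,y)|$, is bounded on $L^p(\R^n)$ for every $1\le p\le\infty$ with norm independent of $R$. The text already gives $\sup_x\int_{\R^n}|s_R^\delta(x,y)|\,dy\le C$ for $\delta>\frac{n-1}{2}$; since $\Phi_k(x,y)$, and hence $s_R^\delta$, is symmetric in $x$ and $y$, one also has $\sup_y\int_{\R^n}|s_R^\delta(x,y)|\,dx\le C$. By the Schur test $\widetilde{S}_R^\delta$ is then bounded on $L^1$ and on $L^\infty$, and interpolation delivers uniform boundedness on every intermediate $L^p$.

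With these two facts the conclusion is immediate from Theorem~\ref{ttilde}: taking $T=S_R^\delta$ and $\widetilde{T}=\widetilde{S}_R^\delta$, the hypothesis of that theorem holds uniformly in $R$, so the mixed norm estimate $\|S_R^\delta f\|_{L^{p,2}(\R^n)}\le C\|f\|_{L^{p,2}(\R^n)}$ follows with $C$ independent of $R$. I expect Step one to be the main obstacle, namely pinning down that the projection kernels, and therefore $s_R^\delta$, really depend only on $|x-y|$ and $|x+y|$; once this structural point is in place, the remaining steps are bookkeeping built on results already proved. The only other thing I would double-check is that the Marcinkiewicz--Zygmund step inside the proof of Theorem~\ref{ttilde} covers the endpoints $p=1,\infty$, which it does for the $\ell^2$-valued extension, so that the stated range $1\le p\le\infty$ is legitimate.
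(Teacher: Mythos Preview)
Your proposal is correct and follows essentially the same route as the paper: reduce to Theorem~\ref{ttilde} by checking that each projection kernel $\Phi_k(x,y)$, hence $s_R^\delta(x,y)$, is a function of $|x-y|$ and $|x+y|$ alone, and then invoke the uniform $L^p$-boundedness of $\widetilde{S}_R^\delta$. The only cosmetic difference is in Step one: the paper extracts $\Phi_k$ from the analytically continued heat kernel via the Fourier integral $t^k\Phi_k(x,y)=\frac{1}{2\pi}\int_0^{2\pi}K_{te^{i\theta}}(x,y)e^{-ik\theta}\,d\theta$, whereas you compare power-series coefficients in $w=e^{-2t}$; these are two ways of saying the same thing.
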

\begin{proof}
To prove this theorem, all we have to do is to observe that
$s_R^\delta(x,y) = K_0(|x-y|,|x+y|).$ To see this it is enough to
check that $\Phi_k(x,y)$ have the same property. The heat kernel
$K_t(x,y)$ can be defined even for complex $t$ from the open unit
disc and we have
$$t^k \ \Phi_k(x,y) = \frac{1}{2\pi} \int\limits_0^{2\pi} K_{te^{i\theta}} (x,y)\  e^{-ik\theta} \ d\theta.$$
Since $K_{te^{i\theta}} (x,y)$ depends only on $x\cdot y$ the same
is true for $\Phi_k(x,y).$ And this proves the theorem.
\end{proof}
\begin{rem}
The above theorem does not say anything for $S_R^\delta$ when
$\delta$ is below the critical index $\frac{n-1}{2}.$ We
conjecture that for $0\leq \delta\leq \frac{n-1}{2},$ the mixed
norm estimates of the theorem holds for all $p$ satisfying
$\frac{2n}{n+1+2\delta}<p<\frac{2n}{n-1-2\delta}.$ This is weaker
than the Bochner-Riesz conjecture for Hermite expansions and hence
stands a better chance of getting proved.
\end{rem}

\section{$g-$functions associated to the Hermite semigroups} \setcounter{equation}{0}

For each positive integer $k,$ the $g-$function $g_k$ associated
to the Hermite semigroup $T_t=e^{-tH}$ is defined by
$$g_k(f,x)^2 = \int\limits_0^\infty |\partial _t^k T_t f(x)|^2 \ t^{2k-1} \ dt.$$
These $g-$functions have been studied in \cite{Th} and the
following estimates are known: for $1<p<\infty,$
$$C_1 \|f\|_p \leq \|g_k(f)\|_p\leq C_2 \|f\|_p$$
and when $p=2$ we have
$$\|g_k(f)\|_2 = C_k \|f\|_2.$$
In this section we are interested in obtaining mixed norm
estimates for $g_k-$functions. The $g_k-$functions are singular
integral operators with kernels taking values in the Hilbert space
$L^2(\R^+,t^{2k-1}dt).$ More precisely the kernel is given by
$\partial_t^k K_t(x,y)$ and hence the results in Section 1 can be
applied to study these functions.

Let us consider the $L^2-$norm of $g_k(f,x)$ in the angular
variable first.
$$\int\limits_{S^{n-1}} g_k(f,r\omega)^2 \ d\omega = \int\limits_{S^{n-1}}
 \left(\int\limits_0^\infty t^{2k-1} \ |\partial_tT_t f(r\omega)|^2 \ dt\right) d\omega.$$
Interchanging the order of integration we see that
$$\int\limits_{S^{n-1}} |\partial_t^k T_t f(r\omega)|^2 \ d\omega=\sum
\limits_{m=0}^{\infty} \ \sum \limits_{j=1}^{d(m)}\
|\int\limits_{S^{n-1}} \partial_t^k T_t f(r\omega) \
Y_{m,j}(\omega) \ d\omega|^2.$$ It follows that
$$\int\limits_{S^{n-1}} \partial_t^k T_t f(r\omega) \ Y_{m,j}(\omega)
\ d\omega = \partial_t^k T_{t,m} f_{m,j}(r)$$ where the kernel of
$T_{t,m}$ is $K_t(x,y) P_m^{\frac{n}{2}-1}(x'\cdot y').$ We also
know that
$$T_{t,m}f_{m,j}(r) =r^m \ T_t^{\frac{n}{2}+m-1} \widetilde{f}_{m,j}(r)$$
and consequently,
\begin{eqnarray*}
&&\hspace{-1cm}\int\limits_{S^{n-1}} \left(\int \limits_0^\infty
t^{2k-1} \ |\partial_t^k T_t f(r\omega)|^2 \ dt\right) d\omega\\
 &&\hspace{1cm}= \sum
\limits_{m=0}^{\infty} \ \sum \limits_{j=1}^{d(m)}\ \left( \int
\limits_0^\infty  t^{2k-1} \ |\partial_t^k T_t^{\frac{n}{2}+m-1}
\widetilde{f}_{m,j}(r)|^2 dt \right)r^{2m}.
\end{eqnarray*}
Thus we are led to study $g_k-$functions defined for the Laguerre
semigroups.

For each $m=0,1,2,\cdots,$ let us define
$$g_{k,m}(f,x)^2 = \int\limits_0^\infty |\partial _t^k T_{t,m} f(x)|^2 \ t^{2k-1} \ dt,$$
where $f$ is a radial function on $\R^n.$ The $L^2-$theory of
$g_k$ functions for Laguerre semigroups immediately gives us
\begin{eqnarray*}
&&\hspace{-1cm}\int \limits_0^\infty \left( \sum
\limits_{m=0}^{\infty} \ \sum \limits_{j=1}^{d(m)}\
g_{k,m}(f_{m,j},r)^2
\right)^\frac{p}{2} r^{n-1} \ dr\\
 &&\hspace{1cm}= C_{k,n}  \int
\limits_0^\infty \left(\sum \limits_{m=0}^{\infty} \ \sum
\limits_{j=1}^{d(m)}\  |f_{m,j}(r)|^2 \right)^\frac{p}{2} r^{n-1}
\ dr
\end{eqnarray*}
for $p=2.$ We are interested in proving the inequality for general
$p, \ 1<p<\infty.$ By showing that $g_{k,m}$ are singular integral
operators uniformly bounded on $L^p(\R^+,w d\mu_{\frac{n}{2}-1})$
we will obtain the following result.

\begin{thm}\label{gkbdd}
For every $k=1,2,\cdots,$ the $g_k-$functions satisfy
$$\|g_k(f)\|_{L^{p,2}(w)} \leq  C_k \|f\|_{L^{p,2}(w)}$$
for all $w\in A_p^{\frac{n}{2}-1} (\R^+), \ 1<p<\infty.$
\end{thm}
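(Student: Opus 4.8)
The plan is to follow the pattern of Theorems~\ref{ttilde} and~\ref{tuniform}: reduce the assertion to a vector valued inequality for the operators $g_{k,m}$ and then invoke the extrapolation theorem~\ref{xtrapoln}. The computation carried out just before the statement already gives
\[
\int_{S^{n-1}} g_k(f,r\omega)^2\, d\omega \;=\; \sum_{m=0}^\infty \sum_{j=1}^{d(m)} g_{k,m}(f_{m,j},r)^2 ,
\]
so, writing $g=\big(\sum_{m,j} g_{k,m}(f_{m,j})^2\big)^{1/2}$ and $f=\big(\sum_{m,j}|f_{m,j}|^2\big)^{1/2}$, the theorem amounts to $\int_0^\infty g^p\, w\, d\mu_{\frac{n}{2}-1}\le C\int_0^\infty f^p\, w\, d\mu_{\frac{n}{2}-1}$ for $w\in A_p^{\frac{n}{2}-1}(\R^+)$. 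By Theorem~\ref{xtrapoln} with $\alpha=\frac{n}{2}-1$ and $p_0=2$, it is enough to prove this at $p=2$ for every $w\in A_2^{\frac{n}{2}-1}(\R^+)$, and that case follows by summing over $m,j$ the single operator bound
\[
\int_0^\infty g_{k,m}(h,r)^2\, w(r)\, d\mu_{\frac{n}{2}-1}\;\le\; C(w)\int_0^\infty |h(r)|^2\, w(r)\, d\mu_{\frac{n}{2}-1},
\]
provided $C(w)$ can be taken independent of $m$.

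To get this uniform weighted bound I would view $g_{k,m}$ as the $\CB$-norm of a linear operator valued in the Hilbert space $\CB=L^2(\R^+,t^{2k-1}dt)$, acting on the space of homogeneous type $(\R^+,|r-s|,d\mu_{\frac{n}{2}-1})$. Its kernel is $\partial_t^k\widetilde K_{t,m}(r,s)$, where $\widetilde K_{t,m}(r,s)=\int_{S^{n-1}}K_t(rx',s\omega')P_m^{\frac{n}{2}-1}(x'\cdot\omega')\,d\omega'$ is the kernel of $T_{t,m}$ on $\R^+$ and $g_{k,m}(h,r)=\|\partial_t^k T_{t,m}h(r)\|_{\CB}$. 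Since $\mu_{\frac{n}{2}-1}(B(r,\rho))\approx \rho\,(r+\rho)^{n-1}$, the Calderón--Zygmund estimates to establish, uniformly in $m$, are the size bound
\[
\big\|\partial_t^k\widetilde K_{t,m}(r,s)\big\|_{\CB}\le \frac{C}{|r-s|\,(r+s)^{n-1}}
\]
and the regularity bound $\big\|\partial_s\partial_t^k\widetilde K_{t,m}(r,s)\big\|_{\CB}\le C\,|r-s|^{-2}(r+s)^{-(n-1)}$. The unweighted $L^2$ identity already recorded shows that $\|g_{k,m}(h)\|_{L^2(d\mu_{\frac{n}{2}-1})}=\sqrt{C_k}\,\|h\|_{L^2(d\mu_{\frac{n}{2}-1})}$ with $C_k$ independent of $m$; combined with the above two kernel bounds, the standard weighted theory for vector valued Calderón--Zygmund operators on spaces of homogeneous type yields the required $A_2$ inequality with $C(w)$ depending only on the $A_2^{\frac{n}{2}-1}$ constant of $w$.

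The feature that makes the kernel estimates uniform in $m$ is the Funk--Hecke representation employed in the proof of Theorem~\ref{ttilde},
\[
\partial_t^k\widetilde K_{t,m}(r,s)=\int_{-1}^1 \partial_t^k K_t(r,s;u)\,P_m^{\frac{n}{2}-1}(u)\,(1-u^2)^{\frac{n-3}{2}}\,du,
\]
combined with the bound $|P_m^{\frac{n}{2}-1}(u)|\le 1$. Applying Minkowski's integral inequality in $\CB$, both of the quantities above are controlled, independently of $m$, by
\[
\int_{-1}^1 \big\|\partial_t^k K_t(r,s;u)\big\|_{\CB}\,(1-u^2)^{\frac{n-3}{2}}\,du
\quad\text{and}\quad
\int_{-1}^1 \big\|\partial_s\partial_t^k K_t(r,s;u)\big\|_{\CB}\,(1-u^2)^{\frac{n-3}{2}}\,du,
\]
so the whole problem collapses to the single, $m$-free kernel
\[
K_t(r,s;u)=\pi^{-\frac{n}{2}}(\sinh 2t)^{-\frac{n}{2}}\,e^{-\frac14(\coth t)q_-(r,s;u)}\,e^{-\frac14(\tanh t)q_+(r,s;u)} .
\]

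The main obstacle, and the only genuinely computational step, is to verify the two Calderón--Zygmund bounds for this Gaussian-type kernel after the $\CB$-norm in $t$ is taken. I would carry this out by splitting the $t$-integral into the ranges $t\lesssim 1$ and $t\gtrsim 1$, using $\coth t\sim t^{-1}$ and $\tanh t\sim t$ as $t\to 0$ and $(\sinh 2t)^{-n/2}\sim e^{-nt}$ as $t\to\infty$; the lower bound $q_-(r,s;u)\ge (r-s)^2$ supplies the singularity in $|r-s|$, while the Gaussian factors together control the decay in $(r+s)$, and the extra $\partial_s$ produces the additional factor $|r-s|^{-1}$ in the local regime. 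These are precisely the heat-kernel $g$-function estimates underlying the $L^p$ theory of the Hermite and Laguerre $g$-functions in \cite{Th}; once they are in place, the extrapolation step completes the proof.
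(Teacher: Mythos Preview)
Your strategy is essentially the paper's: decompose $g_k(f,r\omega)$ in spherical harmonics to reduce to uniform weighted boundedness of the family $g_{k,m}$ on $(\R^+,d\mu_{\frac{n}{2}-1})$, treat each $g_{k,m}$ as a $\CB$-valued Calder\'on--Zygmund operator, exploit $|P_m^{\frac{n}{2}-1}|\le 1$ to get $m$-free kernel bounds, and then extrapolate from $p_0=2$. The only place your sketch departs from the paper is in how the size and smoothness bounds are obtained. You propose to analyse the Gaussian $K_t(r,s;u)$ directly, using $q_-(r,s;u)\ge(r-s)^2$ for the $|r-s|$ singularity and ``the Gaussian factors'' for the $(r+s)$ decay; but the uniform lower bound on $q_-$ only yields $\|\partial_t^k K_t(r,s;u)\|_{\CB}\le C\,q_-^{-n/2}\le C|r-s|^{-n}$, which after the $u$-integration still gives $|r-s|^{-n}$, not the required $|r-s|^{-1}(r+s)^{-(n-1)}$. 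The paper instead quotes the $\R^n$ estimates $\|\partial_t^k K_t(x,y)\|_{\CB}\le C|x-y|^{-n}$ and $\|\nabla_x\partial_t^k K_t(x,y)\|_{\CB}\le C|x-y|^{-n-1}$ from \cite{Th,ST} (Proposition~\ref{kernelbk}) and then evaluates the remaining spherical integral $\int_{S^{n-1}}|rx'-sy'|^{-n}\,dy'$ by the Nowak--Stempak lemma (Lemma~\ref{ABC}); it is this $u$-integration against $(1-u^2)^{\frac{n-3}{2}}$, not the factor $e^{-\frac14(\tanh t)q_+}$, that produces the $(r+s)^{-(n-1)}$ decay. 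Once you replace your heuristic by that lemma, your argument and the paper's coincide.
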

In order to prove this theorem we need to recall
Calder\'{o}n-Zygmund theory of singular integral operators on the
homogeneous space $(\R^+,d\mu_\alpha)$ developed by Calder\'{o}n
\cite{Ca}. Let $B(a,b)$ stand for the ball of radius $b>0$ centred
at $a\in \R^+$ and let $\mu_\alpha (B(a,b))$ stand for its
measure. Note that $g_{k,m}$ can be considered as a (singular)
integral operator with $L^2(\R^+, t^{2k-1} dt) -$valued kernel
given by
$$\partial_t^k K_m(r,s;t) = \int\limits_{S^{n-1}}\partial_t^k K_t(x,y) \ P_m ^{\frac{n}{2}-1}(x'\cdot y') \ dy',$$
$x=rx',\ y=sy'.$ It is therefore, enough to show that $K_m$
satisfy the following Calder\'{o}n-Zygmund estimates uniformly in
$m.$

\begin{prop} \label{kernelest}
The kernels $K_m(r,s;t)$ satisfy the following uniform estimates:
for $r\neq s$
\renewcommand{\descriptionlabel}[1]{\hspace{0cm}\textit{#1}}
\begin{description}
    \item[(i)] $\left( \int\limits_0^\infty | \partial_t^k K_m(r,s;t)|^2 \ t^{2k-1} \ dt
    \right)^{\frac{1}{2}} \leq \frac{C_1}{\mu_{n/2-1}
    (B(r,|r-s|))}$\\
    \item[(ii)] $\left( \int\limits_0^\infty | \partial_t^k \partial_r K_m(r,s;t)|^2 \ t^{2k-1} \ dt
    \right)^{\frac{1}{2}} \leq \frac{C_2}{|r-s| \ \mu_{n/2-1}
    (B(r,|r-s|))}$
\end{description}
where $C_1$ and $C_2$ are independent of $m.$
\end{prop}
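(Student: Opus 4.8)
The plan is to establish both estimates by reducing everything to explicit computations with the Laguerre heat kernel and then extracting the integral over $t$. First I would record the starting point supplied by the previous section: by the Funk-Hecke reduction and Proposition~\ref{Laguerre}, the kernel of $T_{t,m}$ acting on radial functions is
\[
K_m(r,s;t) = c_n\, r^m s^m \, K_t^{\frac{n}{2}+m-1}(r,s),
\]
where, writing $\alpha_m = \frac{n}{2}+m-1$,
\[
K_t^{\alpha_m}(r,s) = (\sinh 2t)^{-1}\, e^{-\frac12(\coth 2t)(r^2+s^2)}\,(rs)^{-\alpha_m}\, I_{\alpha_m}(rs\,\csch 2t).
\]
Thus $r^m s^m K_t^{\alpha_m}(r,s)$ carries the factor $(rs)^{-(\frac{n}{2}-1)}$ after the powers cancel against $(rs)^{-\alpha_m}$, and what remains is a kernel built from $I_{\alpha_m}$ whose only $m$-dependence sits inside the Bessel index. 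The whole point of the uniformity claim is that these estimates must not degrade as $m\to\infty$, so the key is to find bounds on $I_{\alpha_m}$ and its $t$-derivatives that are uniform in the order.

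Next I would carry out the $t$-differentiation. Writing $s_t = \sinh 2t$ and $c_t = \coth 2t$, each application of $\partial_t$ brings down rational functions of $\sinh 2t, \cosh 2t$ together with derivatives of $I_{\alpha_m}(rs\,\csch 2t)$ in its argument; by the recurrence relations $I_\nu'(z) = \tfrac12(I_{\nu-1}(z)+I_{\nu+1}(z))$ and $I_\nu'(z)=I_{\nu-1}(z)-\tfrac{\nu}{z}I_\nu(z)$ these derivatives stay within the family of modified Bessel functions of nearby orders. The crucial analytic inputs are the two-sided asymptotics of $I_\nu$: the uniform bound $0 \le (z/2)^{-\nu}I_\nu(z)/\Gamma(\nu+1) \le e^{z}$ coming directly from the integral representation displayed in the text (valid for $\nu>-\tfrac12$, hence for all $\alpha_m$ once $n\ge 2$), together with the exponential decay $I_\nu(z)\sim (2\pi z)^{-1/2}e^z$ for large argument. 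Combining the small-$t$ behaviour (where $\csch 2t \sim (2t)^{-1}$ is large) with the large-$t$ behaviour (where the Gaussian prefactor $e^{-\frac12 c_t(r^2+s^2)}$ decays), one isolates the off-diagonal region $r\neq s$, where the exponent $-\frac12 c_t(r^2+s^2)+rs\,\csch 2t$ is strictly negative and governs the decay through the quantity $(r-s)^2$.

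The computation of the $L^2$-norm in $t$ then proceeds by splitting $\int_0^\infty$ according to whether $t$ is small or large relative to $|r-s|^2$; in each regime the integrand is dominated by an explicit Gaussian-type expression in $t$, and $\int_0^\infty t^{2k-1}(\cdots)\,dt$ is evaluated or bounded by a Gamma integral to yield a power of $|r-s|$. The size of $\mu_{n/2-1}(B(r,|r-s|))$ is comparable to $|r-s|\,(r+|r-s|)^{n-1}$, which is exactly the scaling produced by these integrals; matching the two gives (i), and (ii) follows by the same scheme after one extra $\partial_r$, which by the same Bessel recurrences produces an additional factor of order $|r-s|^{-1}$ in the off-diagonal region. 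The main obstacle I anticipate is precisely the uniformity in $m$: the naive bounds on $I_{\alpha_m}$ carry $\Gamma(\alpha_m+1)$ factors that blow up with $m$, so one must arrange the estimates so that these factors cancel against the prefactors in $K_t^{\alpha_m}$ (or, equivalently, exploit that the normalized kernel $\frac{(z/2)^{-\nu}}{\Gamma(\nu+1)}I_\nu(z)$ is bounded uniformly in $\nu$ by $e^z$). Controlling the interplay between the Bessel index $\alpha_m$ and the argument $rs\,\csch 2t$ uniformly across all $m$ and all $t\in(0,\infty)$, in particular near the diagonal where $z$ is large and the asymptotics must be used with index-uniform error control, is the delicate point on which the whole Calder\'on-Zygmund argument rests.
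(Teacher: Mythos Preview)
Your plan attacks the problem through the explicit Laguerre heat kernel and Bessel-function asymptotics, and you correctly isolate the main difficulty: obtaining estimates that are uniform in the Bessel index $\alpha_m=\frac{n}{2}+m-1$ as $m\to\infty$. This is genuinely delicate, since the transition regime $z\sim\nu$ for $I_\nu(z)$ requires uniform asymptotic expansions (of Debye/Olver type), and neither the small-argument bound $(z/2)^{-\nu}I_\nu(z)/\Gamma(\nu+1)\le e^z$ nor the large-argument asymptotic $I_\nu(z)\sim(2\pi z)^{-1/2}e^z$ alone gives uniform control across all regimes. Your proposal names this obstacle but does not actually resolve it.

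The paper's proof sidesteps the whole issue. Rather than working with the Laguerre-kernel form, it returns to the integral representation
\[
\partial_t^k K_m(r,s;t)=\int_{S^{n-1}}\partial_t^k K_t(rx',sy')\,P_m^{\frac{n}{2}-1}(x'\cdot y')\,dy',
\]
applies Minkowski's inequality to pull the $L^2(t^{2k-1}dt)$-norm inside the spherical integral, and then uses the single uniform fact $|P_m^{\frac{n}{2}-1}|\le 1$ to discard all $m$-dependence at once. What remains are the already-known pointwise estimates on the full Hermite heat kernel (Proposition~\ref{kernelbk}) together with the spherical integral $\int_{S^{n-1}}|rx'-sy'|^{-n}\,dy'$, which is handled by Lemma~\ref{ABC} after reducing to a one-variable integral in $u=x'\cdot y'$. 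Part (ii) follows the same pattern with $|rx'-sy'|^{-n-1}$. The uniformity in $m$ thus comes for free from the bound on the ultraspherical polynomial, and no Bessel analysis of varying order is needed at all.

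Your route could in principle be pushed through, but it would require substantial additional machinery on uniform Bessel asymptotics that you have not supplied. The paper's argument is both shorter and conceptually cleaner; I would recommend adopting it.
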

The measure $\mu_{\frac{n}{2}-1}
    (B(r,|r-s|))$ has been estimated in \cite{NS}(See Proposition
    3.2)
    according to which
    $$\mu_{\frac{n}{2}-1}(B(r,|r-s|)) \approx |r-s| (r+s)^{n-1}.$$
    We will also make use of the following Lemma due to Nowak and Stempak \cite{NS} (see Lemma
    5.3 in \cite{CR2}).

    \begin{lem} \label{ABC}
    Let $c \geq \frac{1}{2}, \ 0<B<A$ and $\lambda>0.$ Then
    $$\int\limits_0^1 \frac{(1-u)^{c-\frac{1}{2}}}{(A-Bu)^{c+\lambda+\frac{1}{2}}} \ du \leq \frac{C}{A^{c+\frac{1}{2}} (A-B)^{\lambda}}.$$
    \end{lem}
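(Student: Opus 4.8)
The plan is to reduce the estimate to a single elementary power integral and then to control it by a dyadic-type splitting of the interval, since the only genuine difficulty is the competition between the decay of the numerator and the growth of the denominator near the endpoint where the denominator is smallest. First I would make the substitution $v = 1 - u$, which moves the quantity $A - B$ into the denominator: writing $D = A - B > 0$ and noting that $A = D + B$, the integral becomes
$$\int_0^1 \frac{v^{c-\frac{1}{2}}}{(D + Bv)^{c+\lambda+\frac{1}{2}}}\,dv.$$
The purpose of this change of variables is that the denominator is now an increasing affine function of $v$ equal to $D$ at the endpoint $v=0$ and to $A$ at $v=1$; thus the two scales $D$ and $A$ occurring on the right-hand side of the lemma are exactly the two endpoint values of $D + Bv$, and the numerator vanishes precisely at the small endpoint $v=0$.

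Next I would observe that $A = D + B$ is comparable to $\max(D,B)$ and split into two regimes according to which of $D$ or $B$ dominates. If $D \ge B$, then $A \approx D$, and since $Bv \le B \le D$ throughout, one has $D + Bv \approx D \approx A$ on all of $[0,1]$; pulling this constant out leaves $\int_0^1 v^{c-\frac{1}{2}}\,dv$, which is finite because $c \ge \frac{1}{2}$, and yields a bound of size $A^{-(c+\lambda+\frac{1}{2})} \approx A^{-(c+\frac{1}{2})} D^{-\lambda}$, as required. If instead $D < B$, then $A \approx B$ and the denominator genuinely changes scale inside the interval, so I would split at the crossover point $v_0 = D/B \in (0,1)$, where $Bv = D$. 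On $[0,v_0]$ one has $D + Bv \approx D$, and the resulting integral $\int_0^{v_0} v^{c-\frac{1}{2}}\,dv \approx v_0^{c+\frac{1}{2}} = (D/B)^{c+\frac{1}{2}}$ combines with the factor $D^{-(c+\lambda+\frac{1}{2})}$ to give exactly $D^{-\lambda} B^{-(c+\frac{1}{2})} \approx A^{-(c+\frac{1}{2})} D^{-\lambda}$. On $[v_0,1]$ one has $D + Bv \approx Bv$, so the integrand is comparable to $B^{-(c+\lambda+\frac{1}{2})} v^{-\lambda-1}$, and here the hypothesis $\lambda > 0$ enters: $\int_{v_0}^1 v^{-\lambda-1}\,dv \le \lambda^{-1} v_0^{-\lambda} = \lambda^{-1}(B/D)^{\lambda}$, which again produces the factor $D^{-\lambda} B^{-(c+\frac{1}{2})}$. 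Summing the two contributions gives the claimed bound, with a constant depending only on $c$ and $\lambda$ and uniform in $A,B$.

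The step I expect to be the only subtle one is the far region $[v_0,1]$ in the case $D < B$. It is tempting to bypass the case analysis entirely by using the crude bound $(D + Bv)^{\lambda} \ge D^{\lambda}$ to extract the factor $D^{-\lambda}$ at once; however, the remaining integral $\int_0^1 v^{c-\frac{1}{2}}(D+Bv)^{-(c+\frac{1}{2})}\,dv$ then diverges logarithmically as $D \to 0$, so this shortcut fails. The role of $\lambda > 0$ is precisely to convert that would-be logarithm into the admissible power $D^{-\lambda}$, and this is exactly what the splitting at $v_0$ makes explicit. Once the two regimes are in place, every remaining estimate is an elementary power integral.
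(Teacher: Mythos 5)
Your proof is correct, but there is nothing in the paper to compare it against: the authors state Lemma~\ref{ABC} without proof, quoting it from Nowak and Stempak \cite{NS} (via Lemma~5.3 of \cite{CR2}), so your argument makes the lemma self-contained where the paper relies on a citation. I checked the details and they all go through: after the substitution $v=1-u$ with $D=A-B$ the integral is $\int_0^1 v^{c-\frac{1}{2}}(D+Bv)^{-(c+\lambda+\frac{1}{2})}\,dv$; when $D\ge B$ one has $D+Bv\approx D\approx A$ uniformly and the bound is $\frac{1}{c+\frac{1}{2}}D^{-(c+\lambda+\frac{1}{2})}\le 2^{c+\frac{1}{2}}(c+\tfrac{1}{2})^{-1}A^{-(c+\frac{1}{2})}D^{-\lambda}$; when $D<B$ the split at $v_0=D/B$ gives $D^{-(c+\lambda+\frac{1}{2})}\,\frac{v_0^{c+\frac{1}{2}}}{c+\frac{1}{2}}=\frac{1}{c+\frac{1}{2}}D^{-\lambda}B^{-(c+\frac{1}{2})}$ on the near piece and, using $D+Bv\ge Bv$, the bound $B^{-(c+\lambda+\frac{1}{2})}\int_{v_0}^1 v^{-\lambda-1}\,dv\le \frac{1}{\lambda}D^{-\lambda}B^{-(c+\frac{1}{2})}$ on the far piece, and $B\approx A$ finishes it, with $C=C(c,\lambda)$ uniform in $A,B$ as required. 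Your closing remark is also a genuine insight rather than padding: the crude extraction of $D^{-\lambda}$ does fail, since $\int_0^1 v^{c-\frac{1}{2}}(D+Bv)^{-(c+\frac{1}{2})}\,dv\approx B^{-(c+\frac{1}{2})}\log\bigl(1+B/D\bigr)$ blows up as $D\to 0$, and your splitting shows precisely how the hypothesis $\lambda>0$ trades that logarithm for the power $D^{-\lambda}$. Two minor observations: your argument never uses $c\ge\frac{1}{2}$ beyond integrability of $v^{c-\frac{1}{2}}$ at $0$, so it actually proves the lemma for all $c>-\frac{1}{2}$, slightly more than stated; and your constant can be taken explicitly as $2^{c+\frac{1}{2}}\bigl((c+\tfrac{1}{2})^{-1}+\lambda^{-1}\bigr)$.
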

    We also need the following estimates on the kernel
    $\partial_t^k K_t(x,y)$ proved in \cite{Th} (see inequalities (4.1.12) and
    (4.1.13)) and \cite{ST}.

    \begin{prop} \label{kernelbk}
    We have the following estimates (for $x\neq y):$
    \renewcommand{\descriptionlabel}[1]{\hspace{0cm}\textit{#1}}
    \begin{description}
        \item[(i)] $\left( \int\limits_0^\infty |\partial_t^k K_t(x,y)|^2 \ t^{2k-1} \ dt \right)^{\frac{1}{2}} \leq
        C_k|x-y|^{-n}$
        \item[(ii)] $\left( \int\limits_0^\infty |\nabla_x \partial_t^k K_t(x,y)|^2 \ t^{2k-1} \ dt \right)^{\frac{1}{2}} \leq
        C_k|x-y|^{-n-1}$
    \end{description}
    \end{prop}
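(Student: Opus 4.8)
The plan is to reduce both estimates to pointwise Gaussian-type bounds on the kernel and its $x$-gradient, after which the $t$-integration is a routine scaling computation. Recall the factorised form
$$K_t(x,y) = \pi^{-\frac{n}{2}}(\sinh 2t)^{-\frac{n}{2}} e^{-a(t)|x-y|^2}e^{-b(t)|x+y|^2}, \quad a(t)=\tfrac14\coth t,\ b(t)=\tfrac14\tanh t.$$
The first step is to establish, for $t\le 1$, the pointwise estimate
$$|\partial_t^k K_t(x,y)| \le C_k\, t^{-\frac{n}{2}-k}\, e^{-c|x-y|^2/t},$$
together with exponential decay $|\partial_t^k K_t(x,y)| \le C_k e^{-ct}e^{-c(|x-y|^2+|x+y|^2)}$ for $t\ge 1$; for the gradient estimate one gains an extra factor comparable to $|x-y|/t$ in the small-$t$ regime.

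To obtain these bounds I would differentiate $K_t$ by Leibniz's rule on the three factors and Fa\`a di Bruno on the exponential. Each $\partial_t$ either lands on $(\sinh 2t)^{-\frac{n}{2}}$, contributing a factor comparable to $t^{-1}$ near $0$, or on the exponent, producing $-a'(t)|x-y|^2-b'(t)|x+y|^2$. Since $a(t)\sim(4t)^{-1}$ with $\partial_t^j a(t)=O(t^{-1-j})$ as $t\to0$, while $b(t)\sim t/4$ stays bounded, each derivative effectively multiplies by $t^{-1}(1+|x-y|^2/t)$ up to harmless $|x+y|$-terms. After $k$ derivatives the worst term is bounded by $C\,t^{-\frac{n}{2}-k}(1+|x-y|^2/t)^k e^{-a(t)|x-y|^2}$; absorbing the polynomial factor via $s^j e^{-cs}\le C_j e^{-c's}$ with $s=|x-y|^2/t$ (using the matching lower bound $a(t)\ge c/t$ near $0$) yields the claimed bound. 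The regime $t\ge1$ is easier: there $(\sinh 2t)^{-\frac{n}{2}}\approx e^{-nt}$, all derivatives of $a,b$ decay exponentially, and $a(t),b(t)\to\tfrac14$, so the full Gaussian $e^{-\frac14(|x-y|^2+|x+y|^2)}$ survives with exponential decay in $t$.

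With these bounds in hand, part (i) follows by splitting $\int_0^\infty=\int_0^1+\int_1^\infty$. On $(0,1]$ the substitution $s=|x-y|^2/t$ gives
$$\int_0^1 t^{-n-2k}e^{-2c|x-y|^2/t}\,t^{2k-1}\,dt \le \int_0^\infty t^{-n-1}e^{-2c|x-y|^2/t}\,dt = C|x-y|^{-2n},$$
whose square root is the desired $|x-y|^{-n}$. On $[1,\infty)$ the exponential decay gives a bound $C e^{-2c|x-y|^2}\int_1^\infty e^{-2ct}t^{2k-1}\,dt \le C e^{-2c|x-y|^2}$, and since $u^n e^{-cu^2}$ is bounded on $(0,\infty)$ we have $e^{-c|x-y|^2}\le C|x-y|^{-n}$. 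Part (ii) is identical except that the factor $|x-y|/t$ coming from $\nabla_x e^{-a(t)|x-y|^2}=-2a(t)(x-y)e^{-a(t)|x-y|^2}$ raises the power of $|x-y|$ in the scaling integral by two, producing $|x-y|^{-2n-2}$ and hence $|x-y|^{-n-1}$; the contribution of $\nabla_x e^{-b(t)|x+y|^2}$ is of lower order, since the factor $b(t)|x+y|$ it produces is dominated by half the $|x+y|$-Gaussian and stays bounded for $t\le1$.

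The one genuinely delicate point is the bookkeeping for $\partial_t^k$: one must check that every term produced by Leibniz and Fa\`a di Bruno is dominated, \emph{uniformly}, by $C\,t^{-\frac{n}{2}-k}(1+|x-y|^2/t)^k e^{-a(t)|x-y|^2}$. This rests precisely on the sharp small-$t$ behaviour $\partial_t^j a(t)=O(t^{-1-j})$ of the $\coth$ singularity together with the lower bound $a(t)\ge c/t$ that allows the Gaussian to absorb all the polynomial growth. Once that pointwise domination is secured, the remaining scaling integrals are mechanical.
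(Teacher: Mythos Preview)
The paper does not actually prove this proposition: it is quoted from \cite{Th} (inequalities (4.1.12) and (4.1.13)) and \cite{ST} and then used as a black box in the estimates that follow. Your direct argument---pointwise Gaussian bounds on $\partial_t^k K_t$ and $\nabla_x\partial_t^k K_t$ obtained from the explicit Mehler kernel by Leibniz/Fa\`a di Bruno and the small-$t$ asymptotics $a(t)\sim(4t)^{-1}$, $\partial_t^j a(t)=O(t^{-1-j})$, $b^{(j)}(t)=O(1)$, followed by the split $\int_0^1+\int_1^\infty$ and a scaling substitution---is correct and is essentially the proof found in those references.

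One minor point worth making explicit in part (ii): your observation that the factor $b(t)|x+y|$ is absorbed by half the $|x+y|$-Gaussian and ``stays bounded'' shows that this piece contributes at most the same bound as $\partial_t^k K_t$ itself, hence $|x-y|^{-n}$ after the $t$-integration. Near the diagonal this is indeed of lower order than $|x-y|^{-n-1}$, but the proposition is stated for all $x\neq y$. For $|x-y|\ge 1$ one should note that both the small-$t$ integral $\int_0^1 t^{-n-1}e^{-2c|x-y|^2/t}\,dt$ and the large-$t$ contribution retain a factor $e^{-c|x-y|^2}$, which yields any desired negative power of $|x-y|$ and in particular recovers the global bound $C_k|x-y|^{-n-1}$.
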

    We can now easily prove the estimates in Proposition \ref{kernelest}. In
    view of the above estimate (i) in Proposition \ref{kernelbk} and the fact
    that $P_m^{\frac{n}{2}-1}(u)$ is bounded we see that
    \begin{eqnarray*}
       && \hspace{-1cm}\left( \int\limits_0^\infty |\partial_t^k K_m(r,s;t)|^2 \ t^{2k-1} \ dt \right)^{\frac{1}{2}}  \\
       && \hspace{1cm} \leq \int\limits_{S^{n-1}} \left( \int\limits_0^\infty |\partial_t^k K_t(rx',sy')|^2 \ t^{2k-1} \ dt \right)^{\frac{1}{2}} dy'\\
       &&\hspace{1cm} \leq C \int\limits_{S^{n-1}}
       |rx'-sy'|^{-n}
       \ dy'.
    \end{eqnarray*}
    The last integral is equal to a constant multiple of
    $$\int\limits_0^\pi (r^2+s^2-2rs\cos \theta)^{-\frac{n}{2}} \ (\sin \theta)^{n-2} \ d\theta$$
    which is estimated by
    $$\int\limits_0^1 (r^2+s^2-2rsu)^{-\frac{n}{2}} \ (1-u)^{\frac{n-3}{2}} \ du.$$
    By appealing to Lemma \ref{ABC} with $A=r^2+s^2, \ B=2rs, \
    c=\frac{n}{2}-1, \ \lambda=\frac{1}{2}$ we obtain the estimate
    $$\left( \int\limits_0^\infty |\partial_t^k K_m(r,s;t)|^2 \ t^{2k-1} \ dt \right)^{\frac{1}{2}} \leq C|r-s|^{-1} \ (r^2+s^2)^{-\frac{n-1}{2}}.$$
    The desired estimate follows as $|r-s|\
    (r^2+s^2)^{\frac{n-1}{2}}$ is comparable to
    $\mu_{\frac{n}{2}-1}(B(r,|r-s|)).$

    In order to get the estimate on the derivative we note that
    $$\partial_r\partial_t^k K_m(r,s;t) = \int\limits_{S^{n-1}} \partial_r\partial_t^k K_t(x,y) \ P_m^{\frac{n}{2}-1}(x'\cdot y') \ dy'.$$
    Since $\partial_r\partial_t^k K_t(x,y)=\sum\limits_{j=1}^{n} \
    \frac{\partial}{\partial x_j}\partial_t^k K_t(x,y)
    x_j'$ we can estimate
    $$\left( \int\limits_0^\infty |\partial_r \partial_t^k K_m(r,s;t)|^2 \ t^{2k-1} \ dt \right)^{\frac{1}{2}}$$
    in terms of
    $$\int\limits_{S^{n-1}} \left( \int\limits_0^\infty | \nabla_x \partial_t^k K_t(x,y)|^2 \ t^{2k-1} \ dt \right)^{\frac{1}{2}} dy'.$$
    This, in view of (ii) of Proposition \ref{kernelbk} leads to the integral
    $$\int\limits_{S^{n-1}}  |rx'-sy'|^{-n-1} \ dy'$$
    which can be estimated, as above, leading to $|r-s|^{-2}
    (r^2+s^2)^{-\frac{(n-1)}{2}}.$ This proves the second estimate
    in Proposition \ref{kernelest}.

    Thus the $g_k-$functions $g_{k,m}$ are all uniformly bounded
    on the space
    $L^p(\R^+,wd\mu_{\frac{n}{2}-1})$ for any $1<p<\infty, \ w\in
    A_p^{\frac{n}{2}-1}(\R^+)$ and consequently, the $g_k-$functions
    satisfy mixed weighted norm inequalities. This proves
    Theorem \ref{gkbdd}.

    Polarising the identity $\|g_k(f)\|_2 = C_k \|f\|_2$ and using
    the boundedness of $g_k-$functions we can prove the reverse
    inequality
$$C_1 \|f\|_{L^{p,2}(\R^n)} \leq \|g_k(f)\|_{L^{p,2}(\R^n)}.$$ Indeed, polarising
$\|g_k(f)\|_2 = C_k \|f\|_2$ and performing the integration over
$S^{n-1}$ we get
\begin{eqnarray*}
&&\hspace{-1cm} C_k \int \limits_0^\infty \left(\sum
\limits_{m=0}^{\infty} \ \sum \limits_{j=1}^{d(m)}\ f_{m,j}(r)
\overline{h_{m,j}}(r) \right)r^{n-1} \ dr \\
&&\hspace{1cm}=  \int \limits_0^\infty \int \limits_0^\infty
 \sum \limits_{m=0}^{\infty}\  \sum \limits_{j=1}^{d(m)}\
 \partial_t^k T_{t,m}f_{m,j}(r) \ \overline{\partial_t^k T_{t,m}h_{m,j}(r)} \ t^{2k-1} \ dt \ r^{n-1} \ dr.
\end{eqnarray*}
The right hand side is dominated by
$$\int \limits_0^\infty
\left( \sum \limits_{m=0}^{\infty}\  \sum \limits_{j=1}^{d(m)}\
g_{k,m}(f_{m,j},r) \  g_{k,m}(h_{m,j},r)\right) \ r^{n-1} \ dr.$$
 Applying H\"{o}lder's inequality for the vector valued functions
 $(g_{k,m}(f_{m,j}))$ and $(g_{k,m}(h_{m,j})),$ and using the
 boundedness of $g_{k,m}-$functions the above is dominated by
 $\|f\|_{L^{p,2}(\R^n)}\ \|h\|_{L^{p',2}(\R^n)}.$ By
taking supremum over all $h\in L^{p',2}(\R^n)$ we get the required
inequality.

\section{Mixed norm estimates for Hermite multipliers} \setcounter{equation}{0}

In this section we will prove mixed norm estimates for Hermite
multipliers making use of our results on $g-$functions proved in
Section 4. Given a bounded function $\varphi$ defined on the set
of all positive integers we can define a bounded linear operator
on $L^2(\R^n)$ by means of spectral theorem:
$$\varphi(H)f = \sum\limits_{k=0}^{\infty} \ \varphi(2k+n) \ P_kf.$$
This is clearly a bounded operator on $L^2(\R^n)$ but without
further assumptions on $\varphi$ it may not be possible to extend
$\varphi(H)$ to $L^p(\R^n), \ p\neq 2$ as a bounded linear
operator. Consider the finite difference operator $\Delta$ defined
by
$$\Delta \varphi(k) = \varphi(k+1)-\varphi(k)$$
and define $\Delta^{j+1}\varphi(k) = \Delta(\Delta^j \varphi)(k)$
for $j=1,2,\cdots.$ The following theorem has been proved in
\cite{Th}, see Theorem 4.2.1.

\begin{thm}\label{mtrbook}
Assume that $k>\frac{n}{2}$ is an integer and the function
$\varphi$ satisfies $|\Delta^j \varphi(k)| \leq C_j (2k+n)^{-j}, \
j=0,1,2,\cdots ,k.$ Then $\varphi(H)$ is bounded on $L^p(\R^n)$
for all $1<p<\infty.$
\end{thm}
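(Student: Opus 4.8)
The plan is to prove the multiplier bound by Littlewood--Paley theory, exploiting the $g$-functions of Section 4. Recall the two-sided comparison $C_1\|F\|_p\le\|g_1(F)\|_p\le C_2\|F\|_p$ for $1<p<\infty$. Applying its lower bound to $F=\varphi(H)f$ reduces the desired estimate $\|\varphi(H)f\|_p\le C\|f\|_p$ to controlling the square function $g_1(\varphi(H)f)$ by $\|f\|_p$. The whole point of the hypotheses is to furnish exactly this control, and the threshold $k>\frac n2$ will enter as the number of derivatives needed to pass from $L^2$ (Plancherel) information about $\varphi$ to the pointwise kernel bounds that drive the square-function estimate, precisely as in Stein's treatment of the H\"ormander--Mikhlin theorem.

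First I would put the hypotheses in analytic form. Since the eigenvalues $\lambda=2m+n$ are affine in $m$, the bounds $|\Delta^j\varphi(m)|\le C_j(2m+n)^{-j}$ for $0\le j\le k$ are the discrete analogue of $|\lambda^j\varphi^{(j)}(\lambda)|\le C_j$; extending $\varphi$ to a smooth function on $(0,\infty)$ whose derivatives are governed by these finite differences, I would then split $\varphi=\sum_\nu\varphi_\nu$ dyadically, with $\varphi_\nu$ supported where $\lambda\sim 2^\nu$ and satisfying $|\lambda^j\varphi_\nu^{(j)}(\lambda)|\le C$ uniformly in $\nu$ for $0\le j\le k$. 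Each spectrally localized piece $\varphi_\nu(H)$ has kernel $K_\nu(x,y)=\sum_m\varphi_\nu(2m+n)\,\Phi_m(x,y)$, and I would aim to show that these kernels obey Calder\'on--Zygmund estimates uniform in $\nu$.

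The heart of the argument is this uniform kernel estimate. Using Plancherel on the spectral side together with the square-function kernel bounds on $\partial_t^k K_t(x,y)$ of the type in Proposition \ref{kernelbk}, I would estimate $K_\nu$: the $L^2$ control of up to $k$ derivatives of $\varphi_\nu$ converts, via the Sobolev embedding which needs more than $n/2$ derivatives in the $n$-dimensional variable, into pointwise decay of $K_\nu$ with the correct Calder\'on--Zygmund estimates and enough summability in $\nu$. Assembling the pieces through the Littlewood--Paley square function---equivalently, through a $g^*$-function carrying the dimensional parameter---yields a bound $g_1(\varphi(H)f,x)\le C\,G(f,x)$ for a square function $G$ controlled in $L^p$ by $\|f\|_p$, and the chain $\|\varphi(H)f\|_p\le C\|g_1(\varphi(H)f)\|_p\le C\|f\|_p$ closes.

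I expect the main obstacle to be precisely the uniform Calder\'on--Zygmund estimate for the dyadic pieces: one must extract pointwise kernel decay from purely $L^2$, i.e.\ Plancherel, information, and it is the competition between the number $k$ of controlled derivatives and the dimension $n$---mediated by Sobolev embedding---that both forces $k>\frac n2$ and makes the sum over the scales $\nu$ converge. Once this estimate and the attendant square-function assembly are secured, the comparison inequalities for the $g$-functions from Section 4 finish the proof in a routine way.
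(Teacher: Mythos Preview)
Your opening move---reduce $\|\varphi(H)f\|_p$ to a bound on a $g$-function of $\varphi(H)f$---matches the paper's, but from there the two diverge. The paper does not prove this theorem; it quotes it from \cite{Th} and records the mechanism: one proves the \emph{pointwise} inequality
\[
g_{k+1}(\varphi(H)f,x)\le C\,g_k^*(f,x),
\]
and then closes the chain $\|\varphi(H)f\|_p\lesssim\|g_{k+1}(\varphi(H)f)\|_p\le C\|g_k^*(f)\|_p\le C\|f\|_p$ for $p\ge2$, with duality handling $1<p<2$. The pointwise bound comes from Abel summation on the spectral side: write $\partial_t^{k+1}T_t(\varphi(H)f)$ as a sum over $m$, sum by parts $k$ times against the factor $e^{-(2m+n)t}$, and feed in the hypothesis $|\Delta^j\varphi(m)|\le C_j(2m+n)^{-j}$ directly. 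No dyadic decomposition of $\varphi$ and no Calder\'on--Zygmund estimates on multiplier kernels are used; the whole argument stays at the level of the semigroup kernel, where Proposition~\ref{kernelbk} is already available, and the condition $k>\frac n2$ enters only through the $L^p$ boundedness of $g_k^*$.

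Your plan instead decomposes $\varphi=\sum_\nu\varphi_\nu$ dyadically and seeks uniform Calder\'on--Zygmund estimates for the kernels $K_\nu(x,y)=\sum_m\varphi_\nu(2m+n)\,\Phi_m(x,y)$, invoking ``Sobolev embedding'' to pass from $L^2$ control of $k>\frac n2$ derivatives of $\varphi_\nu$ to pointwise decay of $K_\nu$. That step, as written, does not go through. In the Fourier multiplier setting the passage is literally Sobolev/Hausdorff--Young, because the kernel is the inverse Fourier transform of the multiplier. Here $K_\nu$ is a spectral sum against the Hermite projection kernels $\Phi_m(x,y)$, not a Fourier transform, and smoothness of $\varphi_\nu$ alone does not produce pointwise decay of $K_\nu$ without substantial extra input---sharp off-diagonal bounds on $\Phi_m$, heat-kernel/Davies--Gaffney estimates, or the like. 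Proposition~\ref{kernelbk} controls $\partial_t^kK_t$, the semigroup kernel, not the dyadic multiplier kernels $K_\nu$; bridging the two is precisely the difficulty, and it is exactly what the $g_{k+1}\le Cg_k^*$ route sidesteps. You mention the $g^*$-function in passing as ``equivalent'', but it is not an equivalent packaging of your dyadic/Sobolev argument; it is a different argument that avoids the gap. Either supply a concrete mechanism for the $K_\nu$ estimates specific to Hermite projections, or switch to the pointwise $g^*$ inequality, which is what the reference actually does.
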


Actually the theorem is true for more general multipliers but we
have stated it in the above form for the sake of simplicity. The
proof relies on the $g-$function estimates, viz.,
$$C_1\|f\|_p \leq \|g(f)\|_p \leq C_2 \|f\|_p$$
for the $g-$function defined for the semigroup $e^{-tH}.$ Another
ingredient is the boundedness of $g_k^*$ functions: when $k>
\frac{n}{2}$ and $p>2$ we have
$$\|g_k^*(f)\|_p \leq C\|f\|_p.$$
Here $g_k^*(f,x)$ is defined by
$$g_k^*(f,x) ^2 = \int\limits_0^\infty \int\limits_{\R^n} t^{-n/2} \ (1+t^{-1}|x-y|^2)^{-k} \ |\partial_t T_t f(y)|^2 \ dy \ t \ dt.$$
Under the hypothesis of Theorem \ref{mtrbook} one proves that
$$g_{k+1}(\varphi(H)f,x) \leq C\ g_k^*(f,x)$$
and this can be used in conjunction with the boundedness of $g_k$
and $g_k^*$ functions to prove the multiplier theorem.

For each $m,$ we introduce the following $g_{k,m}^*$ functions:
$$g_{k,m}^*(f,x)^2=\int\limits_0^\infty \int \limits_{\R^n} t^{-\frac{n}{2}} \ (1+t^{-1}|x-y|^2)^{-k} \
 |\partial_t T_{t,m}f(y)|^2 \ dy \ t \ dt$$
 where $f$ is a radial function on $\R^n.$ It is then clear that
 $g_{k,m}^*(f,x)$ is a radial function of $x$ and hence we consider
 $g_{k,m}^*(f)$ as defined on $L^p(\R^+,r^{n-1}dr).$ For any radial
 function $h$ on $\R^n$ look at
\begin{eqnarray*}
&&\hspace{-1cm} \int \limits_{\R^n} g_{k,m}^*(f,x)^2 \ h(x) \ dx\\
&&\hspace{1cm}=  \int \limits_0^\infty  |\partial_t T_{t,m}f(y)|^2
\left( \ \int \limits_{\R^n} t^{-\frac{n}{2}} \
(1+t^{-1}|x-y|^2)^{-k} \ h(x) \ dx\right) t \ dt.
\end{eqnarray*}
As $h$ is radial, the inner integral is given by
$$C_n \int\limits_0^\infty h(r) \left( \ \int \limits_{S^{n-1}} t^{-\frac{n}{2}} \
(1+t^{-1}|rx'-sy'|^2)^{-k} dy'\right) r^{n-1} \ dr$$ where $h(r) =
h(x)$ with $|x|=r.$ For $k>\frac{n}{2}$ the function
$t^{-\frac{n}{2}} \ (1+t^{-1}r^2)^{-k}$ is integrable over $\R^+$
with respect to $r^{n-1} \ dr$ and the integral
$$\int\limits_{S^{n-1}} t^{-\frac{n}{2}} \
(1+t^{-1}|rx'-sy'|^2)^{-k} \ dy'$$ is nothing but the generalised
Euclidean translation of $t^{-\frac{n}{2}} \ (1+t^{-1}r^2)^{-k},$
see Stempak \cite{St}, and \cite{Th}. Consequently the integral
$$\int \limits_{\R^n} t^{-\frac{n}{2}} \ (1+t^{-1}|x-y|^2)^{-k} \ h(x) \ dx$$
is dominated by the maximal function $M_{\frac{n}{2}-1}h(s).$ Thus
we have obtained
\begin{eqnarray}\label{maxl}
&&\hspace{-1cm}  \int \limits_0^\infty g_{k,m}^*(f,r)^2 \ h(r) \ r^{n-1} \ dr\\
&&\hspace{1cm}\leq  C \int \limits_0^\infty  g_{k,m}(f,r)^2 \
M_{\frac{n}{2}-1}h(r) \ r^{n-1} \ dr.\nonumber
\end{eqnarray}
Here the constant $C$ is independent of $m.$ By taking $h=1,$ the
boundedness of $g_{k,m}$ on $L^2(\R^+,d\mu_{\frac{n}{2}-1})$ leads
to the same for $g_{k,m}^*.$ By standard arguments one can prove
the uniform estimates
$$
\int \limits_0^\infty (g_{k,m}^*(f,r))^p \ r^{n-1} \ dr\leq C\int
\limits_0^\infty |f(r)|^p \ r^{n-1} \ dr,
$$
for all $p\geq 2.$ We will make use of these estimates in the
following theorem.

\begin{thm}
Let $\varphi$ be as in Theorem \ref{mtrbook}. Then $\varphi(H)$
satisfies the mixed norm estimates
$$\|\varphi(H)f\|_{L^{p,2}(\R^n)} \leq C \|f\|_{L^{p,2}(\R^n)}$$
for $1<p<\infty.$
\end{thm}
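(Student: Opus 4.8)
The plan is to reproduce, in the mixed norm setting, the classical $g$-function proof of the scalar multiplier theorem \thmref{mtrbook}. The engine is the two-sided $g$-function estimate: combining the mixed norm bound of \thmref{gkbdd} with the reverse inequality established at the end of Section 4, we have, for every integer $k$ and all $1<p<\infty$,
$$C_1\|F\|_{L^{p,2}(\R^n)} \leq \|g_{k+1}(F)\|_{L^{p,2}(\R^n)} \leq C_2 \|F\|_{L^{p,2}(\R^n)}.$$
Fixing the integer $k>\frac{n}{2}$ furnished by the hypothesis on $\varphi$ and applying the left inequality to $F=\varphi(H)f$ reduces the theorem to controlling $\|g_{k+1}(\varphi(H)f)\|_{L^{p,2}(\R^n)}$. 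The proof of \thmref{mtrbook} supplies the pointwise domination $g_{k+1}(\varphi(H)f,x) \leq C\, g_k^*(f,x)$ for every $x\in\R^n$; integrating this over each sphere $|x|=r$ and then in $r$ against $r^{n-1}dr$ gives
$$\|g_{k+1}(\varphi(H)f)\|_{L^{p,2}(\R^n)} \leq C\, \|g_k^*(f)\|_{L^{p,2}(\R^n)}.$$
Thus everything reduces to the mixed norm estimate $\|g_k^*(f)\|_{L^{p,2}(\R^n)} \leq C\|f\|_{L^{p,2}(\R^n)}$.

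To handle $g_k^*$ I would first record its angular decomposition. Writing $f=\sum_{m,j} f_{m,j}(r)Y_{m,j}(\omega)$ and using $\int_{S^{n-1}} \partial_t T_t f(s\omega')Y_{m,j}(\omega')\,d\omega' = \partial_t T_{t,m}f_{m,j}(s)$ together with Parseval in the angular variable, one gets
$$\int_{S^{n-1}} |\partial_t T_t f(s\omega')|^2\, d\omega' = \sum_{m,j} |\partial_t T_{t,m}f_{m,j}(s)|^2.$$
Since the kernel $t^{-n/2}(1+t^{-1}|x-y|^2)^{-k}$ depends only on $|x-y|$, averaging it over the sphere $|x|=r$ yields a function of $r$ and $|y|$ alone; feeding the previous display into the definition of $g_k^*$ and carrying out the $\omega$-integration shows that the decomposition is exactly diagonal,
$$\int_{S^{n-1}} g_k^*(f,r\omega)^2\, d\omega = \sum_{m,j} g_{k,m}^*(f_{m,j},r)^2.$$
Consequently $\|g_k^*(f)\|_{L^{p,2}(\R^n)}^p = \int_0^\infty \bigl(\sum_{m,j} g_{k,m}^*(f_{m,j},r)^2\bigr)^{p/2} r^{n-1}\,dr$. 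The uniform estimates $\int_0^\infty (g_{k,m}^*(f,r))^p r^{n-1}\,dr \leq C\int_0^\infty |f(r)|^p r^{n-1}\,dr$ recorded above (valid for $p\geq 2$, with $C$ independent of $m$) together with the Marcinkiewicz--Zygmund theorem \cite{MZ}, applied exactly as in \thmref{ttilde}, then give
$$\int_0^\infty \Bigl(\sum_{m,j} g_{k,m}^*(f_{m,j},r)^2\Bigr)^{\frac{p}{2}} r^{n-1}\,dr \leq C\int_0^\infty \Bigl(\sum_{m,j} |f_{m,j}(r)|^2\Bigr)^{\frac{p}{2}} r^{n-1}\,dr = C\|f\|_{L^{p,2}(\R^n)}^p$$
for all $p\geq 2$. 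Chaining the three reductions proves the theorem for $2\leq p<\infty$.

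For $1<p<2$ I would pass to the adjoint. The space $L^{p,2}(\R^n)$ is the vector valued space $L^p(\R^+,r^{n-1}dr;L^2(S^{n-1}))$, whose dual is $L^{p',2}(\R^n)$, and $\varphi(H)^* = \overline{\varphi}(H)$, where $\overline{\varphi}$ satisfies the same finite difference bounds as $\varphi$ since $|\Delta^j\overline{\varphi}| = |\Delta^j\varphi|$. Hence the case already proved, applied to $\overline{\varphi}(H)$ with exponent $p'\geq 2$, gives the bound for $\varphi(H)$ on $L^{p,2}(\R^n)$ for $1<p\leq 2$ by duality.

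The hard part will be the mixed norm bound for $g_k^*$, that is, the last reduction: one must verify the uniform-in-$m$ $L^p$ boundedness of the component operators $g_{k,m}^*$ for $p\geq 2$ (obtained by iterating the pointwise inequality \eqref{maxl} against the maximal function $M_{\frac{n}{2}-1}$ and interpolating from the $L^2$ endpoint) and confirm that the angular expansion of $g_k^*$ is genuinely diagonal despite the nonlocal kernel $t^{-n/2}(1+t^{-1}|x-y|^2)^{-k}$. The remaining inputs --- the reverse $g$-function inequality of Section 4 and the pointwise comparison $g_{k+1}(\varphi(H)f,x)\leq C g_k^*(f,x)$ --- are imported unchanged from Section 4 and from the scalar theory underlying \thmref{mtrbook}.
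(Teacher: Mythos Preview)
Your overall architecture matches the paper's exactly: reverse $g$-function inequality, the pointwise bound $g_{k+1}(\varphi(H)f,x)\leq C g_k^*(f,x)$, the angular decomposition $\int_{S^{n-1}} g_k^*(f,r\omega)^2\,d\omega=\sum_{m,j} g_{k,m}^*(f_{m,j},r)^2$, and duality for $1<p<2$. The one substantive gap is in how you pass from the scalar bounds on $g_{k,m}^*$ to the vector-valued inequality
\[
\int_0^\infty\Bigl(\sum_{m,j} g_{k,m}^*(f_{m,j},r)^2\Bigr)^{p/2} r^{n-1}\,dr\leq C\int_0^\infty\Bigl(\sum_{m,j}|f_{m,j}(r)|^2\Bigr)^{p/2} r^{n-1}\,dr.
\]
You invoke Marcinkiewicz--Zygmund ``exactly as in \thmref{ttilde}'', but in that theorem the mechanism is \emph{not} uniform boundedness of the family $\{T_m\}$: it is the pointwise domination $|T_m f|\leq \widetilde{T}|f|$ by a \emph{single} linear operator $\widetilde{T}$, to which Marcinkiewicz--Zygmund is then applied. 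Uniform $L^p$ boundedness of a family $\{S_m\}$ does not by itself give $\ell^2$-valued bounds for $(f_m)\mapsto (S_m f_m)$; that is precisely why extrapolation devices exist. Here the natural candidate for a dominating operator would replace $\partial_t T_{t,m}$ by the integral operator with kernel $|\partial_t K_t(x,y)|$, but the resulting square function has no reason to be bounded even on $L^2$, since the spectral cancellation that makes $\|g_1(f)\|_2=c\|f\|_2$ is destroyed.

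The paper closes this gap not with Marcinkiewicz--Zygmund but with a Rubio de Francia argument that exploits the \emph{weighted} estimates for $g_{k,m}$ from \thmref{gkbdd}. For $p>2$ one dualizes against $h\in L^{(p/2)'}$, uses \eqref{maxl} to pass from $g_{k,m}^*$ to $g_{k,m}$ at the cost of replacing $h$ by $v=M_{\frac{n}{2}-1}h$, then majorizes $v$ by $u=(M_{\frac{n}{2}-1}v^s)^{1/s}$ with $1<s<(p/2)'$; by Coifman's construction $u\in A_1^{\frac{n}{2}-1}\subset A_2^{\frac{n}{2}-1}$ with controlled $L^{(p/2)'}$ norm. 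Now the uniform \emph{weighted} $L^2$ bound for $g_{k,m}$ gives $\sum_{m,j}\int g_{k,m}(f_{m,j},r)^2 u(r)\,r^{n-1}dr\leq C\int(\sum_{m,j}|f_{m,j}|^2)u\,r^{n-1}dr$, and H\"older finishes. It is the $A_2$-weighted input, not merely the unweighted $L^p$ bound, that makes the $\ell^2$ sum go through.
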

In order to prove this theorem we proceed as follows. Let
$\varphi$ be as in the theorem and let $F=\varphi(H)f.$ As in the
proof of Theorem 4.2.1 in \cite{Th} we have the estimate
$g_{k+1}(F,x) \leq C_k \ g_k^*(f,x)$ provided $k>\frac{n}{2}.$ The
reverse inequality $\|F\|_{L^{p,2}(\R^n)} \leq C
\|g_{k+1}(F)\|_{L^{p,2}(\R^n)}$ together with the above estimate
gives us
$$\|F\|_{L^{p,2}(\R^n)} \leq C_k \ \|g_k^*(f)\|_{L^{p,2}(\R^n)}$$
and we will show that $\|g_k^*(f)\|_{L^{p,2}(\R^n)} \leq C
\|f\|_{L^{p,2}(\R^n)}$ for $p\geq 2$ which will prove the theorem
for $p\geq 2.$ By duality we can take care of the case $1<p<2.$
Note that
$$\int\limits_{S^{n-1}} g_k^*(f,rx')^2 \ dx' = \sum \limits_{m=0}^{\infty}\  \sum \limits_{j=1}^{d(m)}\ g_{k,m}^*(f_{m,j},r)^2$$
which follows from the fact that
$$\int \limits_{S^{n-1}} t^{-\frac{n}{2}} \
(1+t^{-1}|x-y|^2)^{-k} \ dx'$$ is a radial function of $y$ and
that
$$\int \limits_{S^{n-1}} |\partial_t T_t f(y)|^2 \ dy' = \sum \limits_{m=0}^{\infty}\  \sum \limits_{j=1}^{d(m)}\ |\partial_t T_{t,m} f_{m,j}(s)|^2$$
as observed earlier in the previous section.

Therefore, we are left with proving the inequality
\begin{eqnarray*}
&&\hspace{-1cm} \int \limits_0^\infty \left(\sum \limits_{m=0}^{\infty}\  \sum \limits_{j=1}^{d(m)}\
g_{k,m}^*(f_{m,j},r)^2\right)^{\frac{p}{2}} r^{n-1} \ dr\\
&&\hspace{1cm}\leq C  \int \limits_0^\infty \left(\sum
\limits_{m=0}^{\infty}\  \sum \limits_{j=1}^{d(m)}\
|f_{m,j}(r)|^2\right)^{\frac{p}{2}} r^{n-1} \ dr.
\end{eqnarray*}
The following argument is essentially taken from Rubio de Francia
\cite{Ru}, see the proof of the main theorem. For $p>2$ let
$q=\frac{p}{2}$ and take $h\in L^{q'}$ with $\|h\|_{q'}=1$ and
\begin{eqnarray*}
&&\hspace{-1cm} \int \limits_0^\infty \left(\sum
\limits_{m=0}^{\infty}\  \sum \limits_{j=1}^{d(m)}\
g_{k,m}^*(f_{m,j},r)^2\right)^{\frac{p}{2}} r^{n-1} \ dr\\
&&\hspace{1cm}=  \left(\int \limits_0^\infty \left(\sum
\limits_{m=0}^{\infty}\  \sum \limits_{j=1}^{d(m)}\
g_{k,m}^*(f_{m,j},r)^2\right)h(r) \  r^{n-1} \ dr\right)^q.
\end{eqnarray*}
The uniform inequality (\ref{maxl}) gives us
\begin{eqnarray*}
&&\hspace{-1cm} \int \limits_0^\infty \left(\sum
\limits_{m=0}^{\infty}\  \sum \limits_{j=1}^{d(m)}\
g_{k,m}^*(f_{m,j},r)^2\right)h(r) \ r^{n-1} \ dr\\
&&\hspace{1cm}\leq C   \int \limits_0^\infty \sum
\limits_{m=0}^{\infty}\  \sum \limits_{j=1}^{d(m)}\
g_{k,m}(f_{m,j},r)^2 \ v(r) \ r^{n-1} \ dr
\end{eqnarray*}
where $v(r) = M_{\frac{n}{2}-1} h(r) \in L^{q'}(\R^+,r^{n-1}dr).$
As in \cite{Ru} the last integral is dominated by
$$\sum
\limits_{m=0}^{\infty}\  \sum \limits_{j=1}^{d(m)} \int
\limits_0^\infty g_{k,m}(f_{m,j},r)^2 \ u(r) \ r^{n-1} \ dr$$
where $u(r) = (M_{\frac{n}{2}-1} v^s)^{\frac{1}{s}} \in
L^{q'}(\R^+,r^{n-1} \ dr)$ provided that $1<s<q'.$ Now it can be
shown that the function $u \in A_1^{\frac{n}{2}-1}(\R^+).$ Such a
result for $\R^n$ with Lebesgue measure, due to Coifman, has been
proved in Theorem 7.7 \cite{Du2}. The same proof works for $\R^+$
with the measure $\mu_{\frac{n}{2}-1}.$ Since $u \in
A_1^{\frac{n}{2}-1}(\R^+) \subset A_2^{\frac{n}{2}-1}(\R^+),$ the
weighted norm inequality for $g_{k,m}$ gives
\begin{eqnarray*}
  &&\hspace{-1cm} \sum
\limits_{m=0}^{\infty}\  \sum \limits_{j=1}^{d(m)} \int
\limits_0^\infty g_{k,m}(f_{m,j},r)^2 \ u(r) \ r^{n-1} \ dr \\
   &&\hspace{1cm} \leq C \int
\limits_0^\infty \left( \sum
\limits_{m=0}^{\infty}\  \sum \limits_{j=1}^{d(m)} \ |f_{m,j}(r)|^2 \right)  \ u(r) \ r^{n-1} \ dr\\
   &&\hspace{1cm} \leq C \|u\|_{q'} \ \left(\int
\limits_0^\infty \left( \sum \limits_{m=0}^{\infty}\  \sum
\limits_{j=1}^{d(m)} \ |f_{m,j}(r)|^2 \right)^{\frac{p}{2}}\
r^{n-1} \ dr \right)^{\frac{2}{p}}.
\end{eqnarray*}
Thus we have proved that for the operator $T$ defined on
$L^p(\R^+, l^2)$ by
$$Tf(r)=(g_{k,m}^*(f_{m,j})), \ f=(f_{m,j})$$
$\|Tf\|_{L^p(\R^+,l^2)} <\infty$ for each $f\in L^p(\R^+,l^2).$ By
the uniform boundedness principle it follows that $T$ is actually
bounded on $L^p(\R^+,l^2).$ This proves the inequality
$\|F\|_{L^{p,2}(\R^n)} \leq C \|f\|_{L^{p,2}(\R^n)}$ for $p\geq 2$
and hence the theorem follows.

\begin{center}
{\bf Acknowledgments}
\end{center}
The first author is thankful to the National Board for Higher
Mathematics, India, for the financial support. The work of the
second author is supported by J. C. Bose Fellowship from the
Department of Science and Technology (DST) and also by a grant
from UGC via DSA-SAP.

\end{document}